
\documentclass[letterpaper, 10 pt, conference]{ieeeconf}  
                                                          
\IEEEoverridecommandlockouts                              

\usepackage{graphics} 
\usepackage{epsfig} 
\usepackage{mathptmx} 
\usepackage{amsmath} 
\usepackage{amssymb}  
\usepackage{flushend}

\usepackage{ifthen}
\newboolean{arxiv}

\usepackage{amsthm}  
\usepackage{mathtools}
\usepackage{commath}
\usepackage{amsfonts}
\usepackage{subcaption}
\usepackage[linesnumbered,ruled,vlined]{algorithm2e} 
\SetKwInput{KwInput}{Input}                
\SetKwInput{KwOutput}{Output}
\SetKwInput{KwInitialize}{Initialize} 

\usepackage{ifthen}
\newboolean{showcomments}
\setboolean{showcomments}{true}
\usepackage[usenames,dvipsnames]{color}
\usepackage{todonotes}

\definecolor{bleudefrance}{rgb}{0.19, 0.55, 0.91}
\definecolor{ao(english)}{rgb}{0.0, 0.5, 0.0}

\newcommand{\addcite}[0]{\ifthenelse{\boolean{showcomments}}
{\textcolor{purple}{(add cite(s)) }}{}}%

\newcommand{\pc}[1]{  \ifthenelse{\boolean{showcomments}}
{\todo[inline,color=bleudefrance]{Pengcheng: #1}}{}}
\newcommand{\pcmargin}[1]{\ifthenelse{\boolean{showcomments}}{\marginpar{\color{bleudefrance}\tiny Pengcheng: #1}}{}}

\newcommand{\enrique}[1]{  \ifthenelse{\boolean{showcomments}}
{\todo[inline,color=ao(english)]{Enrique: #1}}{}}
\newcommand{\emmargin}[1]{\ifthenelse{\boolean{showcomments}}{\marginpar{\color{ao(english)}\tiny Enrique: #1}}{}}

\newboolean{showedits}
\setboolean{showedits}{false}
\usepackage[markup=underlined]{changes}
\definechangesauthor[color=bleudefrance]{EM}
\newcommand{\aem}[1]{
\ifthenelse{\boolean{showedits}}
{\added[id=EM]{#1}}
{\!#1\hspace{-4.75pt}}
}
\newcommand{\repem}[2]{
\ifthenelse{\boolean{showedits}}
{\replaced[id=EM]{#1}{#2}}
{\!#1\hspace{-4.75pt}}
}
\newcommand{\dem}[1]{
\ifthenelse{\boolean{showedits}}
{\deleted[id=EM]{#1}}
{}
}
\allowdisplaybreaks
\DeclareMathOperator{\sgn}{sgn}
\DeclareMathOperator{\diff}{diff}

\usepackage{xcolor} 
\usepackage{ifthen}
\usepackage{soul}
\setboolean{showcomments}{true}

\newcommand{\dennice}[1]{\ifthenelse{\boolean{showcomments}}
{\textcolor{purple}{Dennice says: #1}}{}}

\newtheorem{theorem}{Theorem}
\newtheorem{lemma}{Lemma}
\newtheorem{definition}[theorem]{Definition}

\newtheorem{proposition}{Proposition}

\newtheorem{remark}{Remark}

\title{\LARGE \bf
Storage Degradation Aware Economic Dispatch
}


\author{Rajni Kant Bansal, Pengcheng You, Dennice F. Gayme, and Enrique Mallada
\thanks{R. K. Bansal, P. You, D. F. Gayme, and E. Mallada are with the Whiting School of Engineering, Johns Hopkins University, Baltimore, MD 21218, US
        {\tt\small \{rbansal3,pcyou,dennice,mallada\}@jhu.edu}}%

\thanks{This work was supported by NSF through grants ECCS 1711188, CAREER ECCS 1752362 and the US DOE EERE award DE-EE0008006.}
}

\usepackage[noadjust]{cite}
\begin{document}

\setboolean{arxiv}{true}

\maketitle
\thispagestyle{empty}
\pagestyle{empty}

\begin{abstract}

In this paper, we formulate a cycling cost aware economic dispatch problem that co-optimizes generation and storage dispatch while taking into account cycle based storage degradation cost. Our approach exploits the Rainflow cycle counting algorithm to quantify storage degradation for each charging and discharging half-cycle based on its depth. We show that the dispatch is optimal for individual participants in the sense that it maximizes the profit of generators and storage units, under price taking assumptions. We further provide a condition under which the optimal storage response is unique for given market clearing prices. Simulations using data from the New York Independent System Operator (NYISO) illustrate the optimization framework. In particular, they show that the generation-centric dispatch that does not account for storage degradation is insufficient to guarantee storage profitability. 
\end{abstract}

\section{Introduction}
The power system is undergoing rapid changes due to increased penetration of renewable energy sources and the desire for a reduced carbon footprint. However, the intermittency of popular renewable sources, e.g., solar and wind energy, coupled with new variations in load patterns due to demand-side management and devices such as electric vehicles, are affecting system reliability \cite{center2020annual,ipakchi2009grid,Magin2018grid}.
Energy storage systems (ESS) have been widely proposed as means to provide the grid services required to maintain grid reliability and power quality 
\cite{denholm2019potential,vazquez2010energy,caiso2019paper,ribeiro2001energy}.

Lithium-ion based battery storage is one of the fastest growing storage modalities for the power grid.
~\cite{storage2018trend}. However, in contrast with traditional generators, the cost of dispatching storage cannot be directly quantified in terms of supplied power alone. For example, degradation due to numerous charging and discharging half-cycles plays an important role in the operational cost associated with battery storage~\cite{he2015optimal,song2019multi}. However, these and other storage specific costs are not currently accounted for in market settlements and negatively affect the profitability of storage~\cite{d2016new}.

A number of storage degradation models that enable equipment owners to account for storage degradation in their cost/benefit analysis have been proposed~\cite{abdulla2016optimal,rosewater2019battery}. The most widely used are cycle-based degradation models that quantify the cost of each half-cycle (i.e. charging or discharging) based on its depth, defined as the ratio of the energy charged (or discharged) to the capacity of the storage. These models are generally combined with the \emph{Rainflow} cycle counting algorithm, which extracts charging and discharging half-cycles from a storage State of Charge (SoC) profile.
Several approaches based on the Rainflow algorithm have been proposed to incorporate storage degradation into storage operators market participation strategies~\cite{he2015optimal, niu2016sizing, ke2014control, shi2017optimal}. Although these solutions account for the intrinsic degradation of storage actions, they require mapping SoC profiles into actionable energy buy/sell decisions for each time slot, which makes any guarantee of efficiency or optimality, at best, an approximate statement~\cite{xu2017optimal}.

This paper seeks a different approach. Instead of mapping storage degradation into a sequence of buy and sell energy transactions to be submitted to the market, we argue that it is better to formulate a market that intrinsically allows for storage units to participate in it. To this end, we formulate an economic dispatch problem, that takes into account the cost of a storage in terms of the degradation it incurs. For concrete insights, our formulation simply considers one generator unit and one storage unit. However, our results do not critically depend on this assumption.
Using this formulation, and leveraging recent results on convexity of cycle-based degradation cost~\cite{shi2017optimal}, we show that, not only it is possible to efficiently find a storage and generation scheduling that minimizes the overall operational cost, but also that such scheduling is optimal from the viewpoint of individual participants. More precisely, by means of dual decomposition, we show that, under price taking assumptions, the obtained allocation is incentive compatible, i.e., it simultaneously maximizes individual profit of both the generator and the storage. 

\textcolor{black}{ Our work also provides} a novel formulation of the Rainflow algorithm that analytically represents its input-output relation as a piece-wise linear map from the SoC profile to half-cycle depths. This mapping further allows a representation in terms of graph incidence matrices where nodes represent time slots and edges describe charge or discharge half-cycles. Using this reformulation, we provide conditions on the incidence matrix 
under which the optimal storage response is unique for given market clearing prices. Numerical simulations illustrate the importance of accounting for storage cycling cost in the economic dispatch problem by comparing the total cost of operation for different storage sizes and storage capital costs obtained under the traditional (generation centric) dispatch to those of the proposed storage degradation aware dispatch.

\textcolor{black}{The rest of the paper is organized as follows. In Section \ref{Section II}, we define the economic dispatch problem and formulate the optimal response problems for individual participants given market clearing prices. In Section \ref{Section III}, we introduce the storage cost model.} The structural results of the economic dispatch problem that feature the optimality of dual decomposition and the uniqueness of storage response to market clearing prices 
are presented in Section \ref{Section IV}. Numerical analysis and conclusions are provided in Sections~\ref{Section V} and \ref{Section VI}, respectively.
\subsubsection*{Notation} Given a set $S$ of \emph{time indices} $S\subseteq \{0,\dots,T\}$, $S[j]$ denotes the $j^{\mbox{th}}$ smallest element in $S$, e.g., $S=\{3,1,5\}$, $S[2]=3$. Given a vector $x=(x_0,\dots,x_T) \in\mathbb{R}^{T+1}$ and the set $S\subseteq\{0,\dots,T\}$, $x_S\in\mathbb{R}^{|S|}$ denotes the order preserving vector of elements indexed by $S$ (preserving $x$ order). We define the element $x_S(j):=x_{S[j]}$, for example, for the same set $S=\{3,1,5\}$, $x_S(2)=x_{S[2]}=x_3$. For a vector $x_S$ and an index 
 $j \in \{2,3,\cdots,|S|-2\}$, we define the triple difference operation 
\[
(\Delta_{j-1},\Delta_{j},\Delta_{j+1}) := \diff(x,S,j)
\]
with $\Delta_{j}:=|x_S(j+1)-x_S(j)|$, which will be used to identify cycles. 
We next define a direction operation pointing from $t_1$ to $t_2$, which will be used in adding (directed) edges to a directed graph (digraph). For a vector $x_S$ and an index $j \in \{1,2,\cdots,|S|-1\}$ as
\[
\textcolor{black}{(t,t')} := \textrm{dir}(x,S,j)=\left\{\begin{array}{l}
\left(S[{j+1}],S[j]\right), \mbox{ if } x_S(j+1) \ge x_S(j) \\
\left(S[j],S[{j+1}]\right), \mbox{ otherwise}.
\end{array}\right.
\] 
For a set $S'$ of ordered \emph{time index pairs} $S' \subseteq \{0,\dots,T\} \times \{0,\dots,T\}$, \textcolor{black}{$(t,t') \in S'$} denotes an ordered pair from \textcolor{black}{$t$ to $t'$}. 

\section{\textcolor{black}{Problem Formulation}}  \label{Section II}

\subsection{\textcolor{black}{Economic Dispatch}}
We consider 
a simple case with one generator, one storage element and inelastic demand to gain analytical insights, but all of the results can be generalized.
Suppose the generator is able to output $g_t$ amount of power at time $t$ subject to capacity constraints
\begin{equation}\label{gen_limit_ineq}
    \underline{g} \le g_t \le \overline{g}, \quad t \in \{1,2,\dots, T\},
\end{equation}
where $\underline{g}$ and $\overline{g}$ denote the minimum and maximum generation limits, respectively. We use $g
\in\mathbb{R}^T $ to denote the generation profile. Similarly the demand profile is defined as $D
\in\mathbb{R}^T$, where $D_t$ denotes the inelastic demand at time $t \in \{1,2,...,T\}$.

\textcolor{black}{To account for temporally interdependent storage operation, a multi-slot storage degradation aware 
economic dispatch problem (SDAD) is formulated. The SDAD problem minimizes the total cost of the generator and storage over the horizon while satisfying the given demand profile $D$, subject to their respective operational constraints:}
\begin{subequations}
\label{optm_problem}%
\begin{eqnarray}
    \min_{g,u,x} && \alpha_g g^Tg + \beta_g \mathbf{1}^T g + C_s(x)
    \label{optm_obj} \\
    \text{s.t. } 
    && 
    \eqref{gen_limit_ineq}, \ (u,x) \in \mathcal{S} \nonumber \\
    && D + u = g \label{power_bal_const} 
\end{eqnarray}
\end{subequations}
where the first two terms in the objective function \eqref{optm_obj} represent a standard quadratic cost function for the generator with constant coefficients $\alpha_g>0$ and $\beta_g>0$. \textcolor{black}{$C_S(x)$ represents the cycling cost of storage and the set $\mathcal{S}$ represents the operational constraint of the storage unit, defined explicitly below.} Equation
\eqref{power_bal_const} enforces the power balance. 



\subsection{Storage Operation Model}

\textcolor{black}{ We next characterize the explicit storage operation constraints that define the set $\mathcal{S}$.}
Consider a storage element of capacity $E$, for which the amount of energy stored over a time horizon $\{0,1,\hdots,T\}$ is described by a SoC profile $x \in \mathbb{R}^{T+1}
$ with the initial SoC $x_0=x_o$. The SoC at each time $t$ ($x_t$) is normalized with respect to $E$ such that 
\begin{equation}
    0\leq x_t \leq 1 ,\quad t\in \{0,1,\hdots,T\} \label{Soc_limit_ineq}.
\end{equation}
The charging and discharging rates are denoted by $u\in \mathbb{R}^T 
$ where $u_t > 0 $ (resp. $u_t<0$) represents charging (resp. discharging) at time $t \in \{1,..,T\}$.
We assume the rate $u$ is bounded by the power rating of the device(s)
\begin{equation}
    \underline{u} \leq u_t \leq \overline{u}, \quad t\in \{1,2,\hdots,T\}. \label{SoC_control_limit}
\end{equation}
The SoC evolves according to  
\begin{eqnarray}
    x_{t} =  x_{t-1} + \frac{1}{E}u_{t},\quad t \in \{1,2,\dots,T\}, \nonumber 
\end{eqnarray}
which can be rewritten as
\begin{equation}
    Ax = \frac{1}{E}u ,\label{SoC_evolution}
\end{equation}
where $A \in \mathbb{R}^{T \times ({T+1})}$ is the lower triangular matrix
\begin{equation}
    A =  \begin{pmatrix}
-1 & 1 & 0 &\hdots &0\\
 0 &-1 & 1 &\hdots &0\\
\vdots & \ddots &\ddots & \ddots & \vdots\\
0 & \hdots & 0 & -1 & 1\\
\end{pmatrix}. \label{A_matrix} \nonumber 
\end{equation}
For ease of exposition, we have assumed that the storage operation is lossless and the charging or discharging efficiency is 1. We leave extension to the more general case as a direction for future work. Without loss of generality, we further require that the storage returns to its initial SoC after a complete operation over the time horizon, i.e.,
\begin{equation}
    x^Te_{1} =  x^Te_{T+1}  = x_o , \label{SoC_terminal_const}
\end{equation}
where $e_1:= [1 \ 0 \cdots 0]^T$ and $e_{T+1}:=[0 \cdots 0 \ 1]^T$ are the standard basis vectors in $\mathbb{R}^{T+1}$. \textcolor{black}{The operational constraint set is then given by $\mathcal{S}= \{(u,x) : \eqref{Soc_limit_ineq},\,\eqref{SoC_control_limit},\,\eqref{SoC_evolution},\,\eqref{SoC_terminal_const} \}$}.

\subsection{\textcolor{black}{Individual Subproblems}}

\textcolor{black}{In addition to the optimal dispatch $(g^*,u^*,x^*)$ obtained through the solution of \eqref{optm_problem}, we are also interested in the incentive compatibility of individual participants, i.e., the willingness of the generator and the storage to participate in the dispatch problem. 
Formally, we are interested in finding a set of prices $p \in \mathbb{R}^T$ such that the optimal schedule found by \eqref{optm_problem} is also optimal with respect to the following participant problems, which respectively maximize their profits.}

\noindent \emph{Generator Subproblem}:
\begin{subequations} \label{subproblem1}    
\begin{eqnarray}
\max_{g} &&  p^T g - \alpha_g g^Tg - \beta_g \mathbf{1}^T g  \label{subproblem1_obj}  \\
\mathrm{s.t.} && \eqref{gen_limit_ineq}
\end{eqnarray}
\end{subequations}
\noindent
\emph{Storage Subproblem}:
\begin{subequations}\label{subproblem2}
\begin{eqnarray}
     \max_{u,x }  &&  - p^T u -  C_s(x)   \label{subproblem2_obj} \\
     \mathrm{s.t.} && (u,x) \in \mathcal{S}
\end{eqnarray}
\end{subequations}
\textcolor{black}{where the first term in the objective function \eqref{subproblem1_obj} and \eqref{subproblem2_obj} represent the revenue from the market.}  

\textcolor{black}{The additional requirement of ensuring that the solution of the SDAD is also optimal with respect to providing solutions to \eqref{subproblem1} and \eqref{subproblem2}, makes the problem of finding this prices challenging. The next section describes a means to obtain an analytical expression for storage cycling cost which will then be exploited to obtain the incentive compatible economic dispatch.}







\section{Storage Cost Model} \label{Section III}

\subsection{Rainflow Algorithm Based Cycling Cost}

The operational cost of a storage is primarily associated with the degradation incurred during each cycle. 
Here we adopt the 
model \cite{shi2017optimal} that uses a cycle stress function $\Phi(\cdot): [0,1] \mapsto [0,1]$ to quantify the normalized capacity degradation incurred by each charging or discharging half-cycle as a function of the cycle depth. A full cycle that consists of both charging and discharging half-cycles of the same depth $d_i$ then incurs a degradation of $2\Phi(d_i)$. We adopt the empirically convex stress function $\Phi(d_i)$ of the form 
$$ \Phi(d_i) := \frac{\alpha_b}{2} d_i^{\beta_b}$$ with coefficients $\alpha_b > 0$ and $\beta_b>1$~\cite{shi2017optimal}. 

Given a vector $d
\in\mathbb{R}^T$ that summarizes the depths of all half-cycles\footnote{ In order to maintain a fixed-size vector $d$ we fill in zero-depth half-cycles at the tail when there are less than $T$ half-cycles.} in the time horizon $\{0,1,\dots,T\}$, the total cycling cost of storage is given by 
\begin{eqnarray}
    C_s(d) = BE \Big(\sum_{i=1}^{T}\Phi(d_i)\Big) ,
    \label{str_cost_original}
\end{eqnarray}
where $B$ is the unit capital cost per kilowatt-hour of storage capacity. $BE$ therefore amounts to the storage replacement cost. $C_s(\cdot):\mathbb{R}^T\mapsto \mathbb{R}$ denotes the cycling cost function in terms of half-cycle depths.

In order to map the SoC profile $x$ to cycle depths $d$ we introduce 
a 
cycle identification approach (Algorithm~\ref{alg:rainflow}) based on the Rainflow algorithm \cite{lee2011rainflow}. In addition to the vector of half-cycle depths $d$, our algorithm outputs a set $S_f$ of ordered \emph{time index pairs}, which are used to compute the cycle depth of full-cycles from $x$, and a residual set $S_r$ of \emph {time indices}, which are used to compute residual individual half-cycle depths from $x$. While the sets $S_f$ and $S_r$ are not standard outputs of the Rainflow algorithm, they will be particularly useful in our reformulation of the Rainflow algorithm as a piece-wise affine map.


The main stages of our version of the Rainflow algorithm follow:
\begin{itemize}
    \item (\emph{Switching Time Identification}):
    Starting with $S_r=\{0,T\}$, traverse $x$ from $x_0$ to $x_T$ and store in $S_r$ the time indices where the profile $x$ changes direction, e.g., switches between charging and discharging. 
    This procedure comprises steps~\ref{sgn_step_start}-\ref{sgn_step_end} in Algorithm~\ref{alg:rainflow}. 
    \item (\emph{Full Cycle Extraction}): Looping through $j = 2 : |S_r|-2$, compute the net SoC changes between four consecutive switching points, i.e.,
    $(\Delta_{j-1},\Delta_{j},\Delta_{j+1}) := \diff(x,S_r,j)$.
    If $\Delta_{j-1} \geq \Delta_j$ and $\Delta_{j+1} \geq \Delta_j$, extract a full cycle of depth $\Delta_{j}$ i.e., remove $S_r[j]$ and $S_r[j+1]$ from $S_r$ and add $\textrm{dir}(x,S_r,j)$ to $S_f$. 
    The extracted charging and discharging half-cycle
     of depth $\Delta_j$ is added into the cycle depth vector $d$. This stage is described in steps \ref{check}-\ref{check2} of Algorithm \ref{alg:rainflow}.
    \item (\emph{Half-cycle Extraction)}:
    Once all full cycles are extracted, iterate through $j = 1:|S_r|-1$ to add the depths of all remaining half-cycles. This stage is described in steps~\ref{check3}-\ref{end} of Algorithm~\ref{alg:rainflow}.  
\end{itemize}
We illustrate this procedure using an example SoC profile shown in Fig.~\ref{fig:rainflow_example1}. After steps~\ref{sgn_step_start}-\ref{sgn_step_end} we start with sets $S_r = \{0,1,2,3\}$ and $S_f=\emptyset$. Since $\Delta_1 \geq \Delta_2$ and $\Delta_3 \geq \Delta_2$, with $(\Delta_{1},\Delta_{2},\Delta_{3}) := \diff(x,S_r,2)$, a full cycle of depth $x_1-x_2$ is extracted (see the center panel of Fig.~\ref{fig:rainflow_example1}). This operation leaves the residual charging half-cycle from $x_0$ to $x_3$, shown in the right panel of Fig.~\ref{fig:rainflow_example1}. The output of the algorithm is then $d=[x_1-x_2, x_1-x_2, x_3-x_0]^T$, $S_r = \{0,3\}$, and $S_f=\{(1,2)\}$.

\begin{figure}[htp]
    \centering
    \includegraphics[width=8cm]{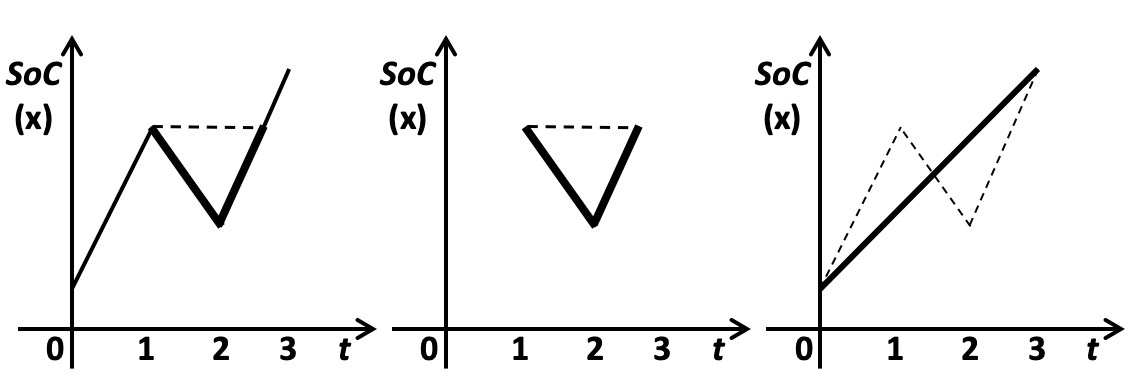}
    \caption{An example of an SoC profile, its extracted full cycle, and the residual half-cycle from left to right.}%
    \label{fig:rainflow_example1}%
\end{figure}%
\vspace*{-.7cm}
\begin{algorithm}[ht]
\SetAlgoLined
\KwResult{vector $d  \in \mathbb{R}^T$, sets $S_f$ and  $S_r$}
\KwInput{SoC Profile $x$}
\KwInitialize{$S_r = \{0,T\}$, $S_{f}=\emptyset$, $d = \Vec{0}$, counter $k=1$} 
\caption{Rainflow Cycle Counting} \label{alg:rainflow}
\For{$t = 1:T-1$ \label{sgn_step_start}}{
    \If{$\sgn(x_{t+1}-x_t) == - \sgn(x_t - x_{t-1})$}{
        $S_r = S_r + \{t\}$\;}}\label{sgn_step_end}
\emph{Cycle} = $\mathrm{true}$\;
\While{Cycle == $\mathrm{true}$ \text{\rm\bf and } $|S_r| > 3$ \label{check}}{
    \emph{Cycle} = \textrm{false}\;
    {\For{$j = 2:|S_r|-2$}{
    $(\Delta_{j-1},\Delta_{j},\Delta_{j+1})=\diff(x,S_r,j)$\;
    \If {$\Delta_{j-1} \geq \Delta_{j}$, and $\Delta_{j+1} \geq \Delta_{j}$} {$S_r\!=\! S_r\!-\!\{S_r[j],S_r[j\!+\!1]\}$,
    $S_{f} \!=\! S_{f}\!+\textrm{dir}(x,S_r,j)$  
    \;
    $d(k) = d(k+1) = \Delta_j$ and $k=k+2$\;
    \emph{Cycle} = \textrm{true}, and restart from step 
\ref{check}\;}}}}\label{check2}
 \For{j=1:$|S_r|-1$}{\label{check3}
     $d(j+k-1) = |x_{S_r}(j+1)-x_{S_r}(j)|$\;}\label{end}
\end{algorithm}%

\subsection{Incidence Matrix Representation of Rainflow Algorithm}

We now illustrate how the Rainflow algorithm can be represented by the piece-wise linear map from the SoC profile $x$ to the cycle depth vector $d$:
\begin{equation}
    d = Rainflow(x)  = M(x)^T x.
    \label{rainflow_matrix_def}
\end{equation}
Here $M(x) \in \mathbb{R}^{(T+1)\times T}$ is an incidence matrix for a $x$-dependent directed graph $\mathcal{G}(x) := \mathcal{G}(x;\mathcal{V},\mathcal{E})$, with rows and columns representing nodes in $\mathcal{V}$ and edges in $\mathcal{E}$, respectively. We represent the $(i,j)^{\mbox{th}}$ element of $M(x)$ as $M_{ij}(x)$, or just $M_{ij}$ if its dependence on $x$ is clear from the context.
The graph $\mathcal{G}(x)$ consists of $T+1$ nodes, indexed by $t\in\{0,1,\dots,T\}$.

Algorithm~\ref{alg:incidence_matrix} provides the procedure for finding the edges of $\mathcal{G}(x)$, which is  summarized as follows.
\begin{itemize}
    \item Each full cycle identified 
    by Algorithm~\ref{alg:rainflow} corresponds to an element $(t_1,t_2)\in S_f$. For each of these cycles add $(t_1,t_2)$ to the edge set $\mathcal{E}$ \emph{twice}, i.e. $\mathcal{E} = \mathcal{E} \cup \{(t_1,t_2),(t_1,t_2)\}$, as outlined in
  steps~\ref{full_graph_start}-\ref{full_graph_end} in Algorithm~\ref{alg:incidence_matrix}.
    \item  Using the residual set $S_r$  
   output from Algorithm~\ref{alg:rainflow} and the direction operation $\textrm{dir}(x,S_r,j)$,
    iterate through $j = 1:|S_r|-1$ to add one directed edge, corresponding to each half-cycle to connect nodes $S_r[j]$ and $S_r[j+1]$. 
    See steps~\ref{rex_graph_start}-\ref{res_graph_end} in Algorithm~\ref{alg:incidence_matrix}.
\end{itemize}
We illustrate this for an example in left panel of Fig.  \ref{fig:graph_example}. Given $(x_0,\dots,x_5)$, there are two cycles with depth $x_3-x_2$ and $x_1-x_4$. The output of Algorithm \ref{alg:rainflow} would be $S_f=\{(3,2),(1,4)\}$, $S_r=\{0,5\}$, and $d=[x_3-x_2,x_3-x_2,x_1-x_4,x_1-x_4,x_5-x_0]^T$. This leads to $\mathcal{E}=\{(3,2),(3,2),(1,4),(1,4),(5,0)\}$, shown in the right panel of Fig.~\ref{fig:graph_example}.

\begin{algorithm}[ht]
\SetAlgoLined
\KwResult{Incidence Matrix $M(x)\in \mathbb{R}^{(T+1) \times T}$ }
\KwInput{SoC profile $x$, sets $S_r$ and $S_f$}
\KwInitialize{Digraph $\mathcal G(x;\mathcal{V},\mathcal{E})$, $\mathcal{V} = [0,\hdots,T],$ $\mathcal{E}= \emptyset$}
\For{i =1:$|S_f|$\label{full_graph_start}}{
    $\mathcal{E} = \mathcal{E} \cup\{S_f[i],S_f[i]\}$\;
    }
    \label{full_graph_end}
\For{j=1:$|S_r|-1$\label{rex_graph_start}}{
    $\mathcal{E} = \mathcal{E} + \textrm{dir}(x,S_r,j) $\;} 
    \label{res_graph_end} 
    Define $M(x)$ as the incidence matrix for $\mathcal G$ and attach zero columns as necessary.   
\caption{Rainflow Incidence Matrix M}
\label{alg:incidence_matrix}
\end{algorithm}
\begin{figure}[ht]
    \centering
    \includegraphics[width = 0.45\textwidth ]{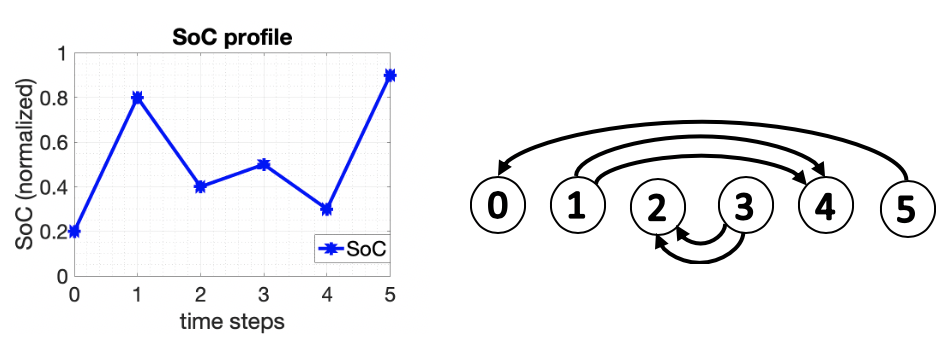}
    \caption{An example of SoC profile and its associated graph.}
    \label{fig:graph_example}
\end{figure}%
We specify the incidence matrix $M(x)$ of the graph $\mathcal{G}(x)$ such that the edges are indexed according to the order in which they are added. Then zero columns are attached to fill in the remainder of the incidence matrix such that $M(x)\in\mathbb{R}^{(T+1)\times T}$ always holds. For example, the incidence matrix for the example SoC profile in Fig.~\ref{fig:graph_example} is
\begin{align*}
M(x) = \begin{bmatrix} 0&0&0&0&-1\\0&0&1&1&0\\-1&-1&0&0&0\\1&1&0&0&0\\0&0&-1&-1&0\\0&0&0&0&1\end{bmatrix}.
\end{align*}
We can now explicitly express the total storage cycling cost \eqref{str_cost_original} in terms of the SoC profile $x$ as
\begin{eqnarray}
    C_s(x) = \frac{\alpha_bBE}{2}\sum_{i=1}^{T}(d_i)^{\beta_b} = \frac{\alpha_bBE}{2}\sum_{i=1}^{T}\left(\sum_{j=1}^{T+1}M_{ji}x_{j-1}\right)^{\beta_b}.
    \label{str_cost}
\end{eqnarray}
where we basically substitute~\eqref{rainflow_matrix_def} in~\eqref{str_cost_original}. \textcolor{black}{{Temporal coupling arises through the dependence of the incidence matrix $M(x)$ on the SoC  profile $x$.}}

\section{Structural Results} \label{Section IV}

\textcolor{black}{The piece-wise polynomial structure of the cost function \eqref{str_cost} makes it challenging to solve \eqref{optm_problem} even numerically.
} 
In this section we show that not only it is possible to efficiently solve \eqref{optm_problem}, but also by setting the price vector $p$ equal to the optimal Lagrange multiplier $\lambda^*$ associated with \eqref{power_bal_const}, the solution to \eqref{optm_problem} is also optimal  w.r.t. the generator and storage subproblems \eqref{subproblem1} and \eqref{subproblem2}, thus achieving incentive compatibility.
We end this section by discussing uniqueness of \eqref{subproblem2}.

\subsection{Convexity of Cycling Cost}

Our results build on the fact that under mild assumptions, the cycling cost function $C_s(x)$ in \eqref{str_cost} is convex~\cite{shi2017optimal}.
\begin{theorem}[Cycling Cost Convexity~\cite{shi2017optimal}]
For a given convex stress function $\Phi$, the cycling cost function $C_s(x)$ \eqref{str_cost} is convex with respect to the SoC profile $x$.
 \label{theorem1}
\end{theorem}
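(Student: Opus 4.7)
The plan is to leverage the piece-wise linear structure of the Rainflow map $x \mapsto d = M(x)^T x$ established in \eqref{rainflow_matrix_def} together with convexity of the cycle stress function $\Phi$. First, I would establish local convexity: the domain $[0,1]^{T+1}$ is partitioned into finitely many polyhedral regions delimited by the sign patterns of the consecutive differences $x_t - x_{t-1}$ (which determine $S_r$) and by the ordering comparisons $\Delta_{j-1} \lessgtr \Delta_j$ that select branches of the while-loop in Algorithm~\ref{alg:rainflow}. On each such region the incidence matrix $M(x) \equiv M$ is constant, so every cycle depth $d_i(x) = \sum_j M_{ji}\, x_{j-1}$ is an affine function of $x$, and the sign convention built into Algorithm~\ref{alg:incidence_matrix} makes this affine form nonnegative. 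Since $\Phi(d) = (\alpha_b/2)\,d^{\beta_b}$ with $\beta_b>1$ is convex on $[0,1]$, each composition $\Phi(d_i(x))$ is convex, and summing yields convexity of $C_s$ on every region.

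The second step is to glue these pieces together. A function that is convex on each cell of a finite polyhedral partition and continuous across the shared faces is globally convex provided that, at every boundary point, the subdifferentials of the two adjacent cells have nonempty intersection (equivalently, their one-sided directional derivatives agree along every tangent direction). Continuity follows because whenever $x$ lies on a boundary, the ambiguity in Algorithm~\ref{alg:rainflow} corresponds either to a degenerate cycle of depth $\Delta_j = 0$ or to a tie $\Delta_{j-1} = \Delta_j$ in which two alternative pairings yield the same multiset of depths. For the gradient-matching condition, the discrepancy between candidate gradients involves $\Phi'(d_i)$ evaluated either at a shared depth (contributions cancel) or at the degenerate value $d_i = 0$; since $\beta_b > 1$ implies $\Phi'(0) = 0$, the remaining discrepancy also vanishes, so a common subgradient exists.

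I expect the main obstacle to be a clean enumeration of the boundary cases, because the recursive full-cycle extractions in Algorithm~\ref{alg:rainflow} induce a combinatorially intricate adjacency structure among the constancy regions of $M(x)$. A tidy fallback is induction on $|S_r|$: a single full-cycle extraction reduces a profile with $|S_r|$ extrema to one with $|S_r|-2$, replacing the affine depths $\Delta_{j-1},\Delta_j,\Delta_{j+1}$ by the single combined residual depth $\Delta_{j-1}-\Delta_j+\Delta_{j+1}$, which remains affine and nonnegative on the corresponding region. The base case of a monotone profile yields $C_s(x) = \Phi(|x_T - x_0|)$, manifestly convex; the induction step then reduces to verifying that replacing the triple $(\Phi(\Delta_{j-1}),\Phi(\Delta_j),\Phi(\Delta_{j+1}))$ by the post-extraction pair $(2\Phi(\Delta_j),\Phi(\Delta_{j-1}-\Delta_j+\Delta_{j+1}))$ preserves joint convexity in $x$, which follows from scalar convexity of $\Phi$ applied along the affine directions that parametrize each region.
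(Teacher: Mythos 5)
Your first step (local convexity on each polyhedral cell where $M(x)$ is constant, since each depth is a nonnegative affine function of $x$ and $\Phi$ is convex) is fine, and your continuity claim across cell boundaries is also correct. The genuine gap is in the gluing step: the assertion that the gradients of adjacent pieces agree at the interfaces is false, and the function is in fact not $C^1$ there. Take four consecutive switching values $a=x_{S_r}(j-1)$, $b=x_{S_r}(j)$, $c=x_{S_r}(j+1)$, $e=x_{S_r}(j+2)$ with $a<b$, $c<b$, $c<e$, and consider the interface $\Delta_{j-1}=\Delta_j$, i.e.\ $a=c$. On the extraction side the cost contribution is $2\Phi(b-c)+\Phi(e-a)$, with $\partial/\partial a=-\Phi'(e-a)$; on the non-extraction side it is $\Phi(b-a)+\Phi(b-c)+\Phi(e-c)$, with $\partial/\partial a=-\Phi'(b-a)$. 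At $a=c$ these are $-\Phi'(e-c)$ and $-\Phi'(b-c)$, evaluated at two distinct, generally nonzero depths, so they do not cancel and $\Phi'(0)=0$ is irrelevant. What actually saves convexity is a one-sided inequality: crossing from the non-extraction cell into the extraction cell the directional derivative must jump \emph{up}, which here reduces to $\Phi'(b-c)\le\Phi'(e-c)$ and holds only because $\Phi'$ is nondecreasing and the extraction condition forces $\Delta_{j+1}\ge\Delta_j$, i.e.\ $e\ge b$. So the correct gluing criterion is a convex-kink (subgradient monotonicity) condition that must be verified at every type of interface, including those generated by the recursive re-pairings after an extraction changes $S_r$; your proposal neither states nor checks it. Your fallback induction on $\abs{S_r}$ does not repair this, because it only recomputes the cost within a fixed cell and never addresses convexity across cells, and the step "replacing the triple by the pair preserves joint convexity" is a restatement of the claim rather than an argument.

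For contrast, the paper avoids interface enumeration entirely: it decomposes one profile into step functions, proves perturbation bounds $\abs{\Delta d_{c,i}}\le\abs{q_t}$ and $\abs{\sum_i\Delta d_{c,i}}\le\abs{q_t}$ per step change (chaining through boundary profiles, where the depth multiset is continuous), and then verifies the Jensen inequality $C_s(\lambda x+(1-\lambda)q)\le\lambda C_s(x)+(1-\lambda)C_s(q)$ directly using auxiliary inequalities on $\Phi$ from the cited reference, together with a lemma showing the Rainflow cost dominates the naive half-cycle enumeration cost. If you want to pursue your route, the missing ingredient is a systematic proof that at every facet of the partition the one-sided directional derivatives satisfy the monotonicity required for convexity, which will again hinge on the inequalities $\Delta_{j-1}\ge\Delta_j$, $\Delta_{j+1}\ge\Delta_j$ and the monotonicity of $\Phi'$.
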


This striking result appeared in \cite{shi2017optimal} in the study of cycling-cost aware models for pay-for-performance storage operation. The proof relies on an implicit assumption in their induction method, which restricts combinations of two SoC profiles to those consisting of the same number of non-zero step changes. Extensions of that proof method rely on identifying and enumerating new scenarios. We avoid the need for this enumeration through an alternate proof method, which builds upon~\cite[Lemma~1]{shi2017optimal} to extend the applicability of their result.
\ifthenelse{\boolean{arxiv}}{The full proof is in the Appendix.}{Due to the page limitations we omit the proof here, it is provided in the technical report~\cite{BYGM20}.} An important consequence of Theorem \ref{theorem1} is that it makes our economic dispatch problem \eqref{optm_problem} convex. Therefore, off-the-shelf solvers can be applied to attain a globally optimal dispatch with the minimum total cost that strikes a trade-off between generation and degradation aware storage usage.
\subsection{Dual Pricing and Dispatch Optimality}

We now proceed to show how a proper choice of the price vector $p$ ensures that the optimal dispatch of \eqref{optm_problem} is also optimal for subproblems \eqref{subproblem1} and \eqref{subproblem2}. We use  $\lambda \in \mathbb{R}^T$ to denote the vector of dual variables associated with the power balance constraint \eqref{power_bal_const}. 
The (partial) Lagrangian for the problem \eqref{optm_problem} that only relaxes \eqref{power_bal_const} gives
\begin{align}
    \!\!\!  \mathcal{L}_1(g,u,x,\lambda) := \alpha_g g^Tg + \beta_g \mathbf{1}^T g +C_s + \lambda^T(D+u-g).
     \label{lagrangian_prob}
\end{align}
The corresponding dual problem is given by
\begin{eqnarray}
    && \max_\lambda \quad \mathcal{D}(\lambda) 
\label{dual_problem}
\end{eqnarray}
where for 
 a fixed $\lambda$
\begin{eqnarray}
 \mathcal{D}(\lambda) := \min_{g,u,x :  \eqref{gen_limit_ineq} \eqref{Soc_limit_ineq}\eqref{SoC_control_limit}\eqref{SoC_evolution}\eqref{SoC_terminal_const}}\mathcal{L}_1(g,u,x,\lambda) \nonumber  \ .
\end{eqnarray}
Note that the 
$\mathcal{D}(\lambda) $ is separable in terms of $g$ and $(u,x)$ as
\begin{equation}\label{decompoisition}
    \mathcal{D}(\lambda) = S_g(\lambda) + S_s(\lambda) + \lambda^T D,
\end{equation}
where $S_g(\lambda)$ and $S_s(\lambda)$ are respectively equivalent to the subproblems \eqref{subproblem1}, \eqref{subproblem2} with market clearing prices $p=\lambda$. 

This motivates setting the market clearing prices to $p=\lambda^*$, where $\lambda^*$ is the optimal solution of \eqref{dual_problem}. Intuitively, the dual variables $\lambda$ indicate the marginal cost of maintaining power balance. 
In fact, since all of the constraints in the primal problem \eqref{optm_problem} are affine, linear constraint qualifications are satisfied, guaranteeing strong duality \cite{boyd}. Hence the duality gap is zero and the primal problem \eqref{optm_problem} and the dual problem \eqref{dual_problem} coincides at the optimal values. This implies the optimality of the decomposition \eqref{decompoisition}. More precisely, the dual pricing scheme is incentive compatible, as we formalize below.
\begin{theorem}\label{theorem2}
Given an optimal primal-dual solution $(g^{*},u^{*},x^{*},\lambda^{*})$ to the economic dispatch problem \eqref{optm_problem}, $g^*$ and $(u^{*},x^{*})$ are also optimal with respect to the generator subproblem \eqref{subproblem1} and the storage subproblem \eqref{subproblem2}, respectively, given the market clearing prices $p=\lambda^*$.
\end{theorem}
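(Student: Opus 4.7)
The plan is to exploit strong duality and the separability of the Lagrangian (\ref{lagrangian_prob}) to reduce the claim to a standard dual-decomposition argument. Since, by Theorem~\ref{theorem1}, the cycling cost $C_s(x)$ is convex, the objective of (\ref{optm_problem}) is convex in $(g,u,x)$, and every constraint defining the feasible set (the box constraints on $g$, $x$, $u$, the SoC evolution (\ref{SoC_evolution}), the terminal condition (\ref{SoC_terminal_const}), and the power balance (\ref{power_bal_const})) is affine. Linear constraint qualification therefore holds, strong duality follows, and the primal-dual pair $(g^{*},u^{*},x^{*},\lambda^{*})$ satisfies the KKT conditions.

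The key step is to observe that when $\lambda = \lambda^{*}$ is held fixed, the Lagrangian splits additively as
\begin{equation*}
\mathcal{L}_1(g,u,x,\lambda^{*}) = \underbrace{\bigl(\alpha_g g^{T}g + \beta_g \mathbf{1}^{T}g - (\lambda^{*})^{T}g\bigr)}_{=: \mathcal{L}_g(g)} + \underbrace{\bigl(C_s(x) + (\lambda^{*})^{T}u\bigr)}_{=: \mathcal{L}_s(u,x)} + (\lambda^{*})^{T}D,
\end{equation*}
and, correspondingly, the remaining constraints decouple: (\ref{gen_limit_ineq}) involves only $g$, while the set $\mathcal{S}$ involves only $(u,x)$. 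By strong duality, $(g^{*},u^{*},x^{*})$ is a minimizer of $\mathcal{L}_1(\cdot,\cdot,\cdot,\lambda^{*})$ over the feasible set defined by (\ref{gen_limit_ineq}) and $\mathcal{S}$. Because the Lagrangian and the constraints decouple across the two blocks, $g^{*}$ must minimize $\mathcal{L}_g(g)$ subject to (\ref{gen_limit_ineq}), and $(u^{*},x^{*})$ must minimize $\mathcal{L}_s(u,x)$ subject to $(u,x)\in\mathcal{S}$.

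Finally, I would negate each of these minimizations: minimizing $\mathcal{L}_g(g)$ is equivalent to maximizing $(\lambda^{*})^{T}g - \alpha_g g^{T}g - \beta_g \mathbf{1}^{T}g$, which is exactly the generator subproblem (\ref{subproblem1}) with $p=\lambda^{*}$; minimizing $\mathcal{L}_s(u,x)$ is equivalent to maximizing $-(\lambda^{*})^{T}u - C_s(x)$, which is exactly the storage subproblem (\ref{subproblem2}) with $p=\lambda^{*}$. This delivers the desired incentive-compatibility statement.

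The only nontrivial obstacle I anticipate is justifying strong duality and the equivalence between the primal optimum and a minimizer of the Lagrangian at $\lambda^{*}$: one needs convexity of $C_s$, which is precisely what Theorem~\ref{theorem1} provides, together with affineness of the constraints so that a constraint qualification holds. Once those hypotheses are in place, the separability is mechanical, and no delicate case analysis of the Rainflow map is required in this proof.
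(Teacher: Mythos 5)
Your proposal is correct, and it formalizes the dual-decomposition narrative that the paper sketches in the text immediately preceding the theorem (the splitting $\mathcal{D}(\lambda)=S_g(\lambda)+S_s(\lambda)+\lambda^T D$), but the paper's actual proof takes a different formal route: it writes out the full Lagrangian relaxing \emph{all} constraints of \eqref{optm_problem}, lists the KKT system (stationarity in $g$, $x$, $u$, complementary slackness, primal and dual feasibility), and then observes that the subset of these conditions involving only $g$ and the multipliers $(\overline{\gamma},\underline{\gamma})$ is exactly the KKT system of the generator subproblem \eqref{subproblem1} at $p=\lambda^*$, with an analogous identification for the storage block. Your route instead uses the saddle-point characterization: strong duality (justified, as you note, by convexity of $C_s$ from Theorem~\ref{theorem1} plus affine constraints) implies $(g^*,u^*,x^*)$ minimizes $\mathcal{L}_1(\cdot,\lambda^*)$ over $\eqref{gen_limit_ineq}\times\mathcal{S}$, and block-separability of both the objective and the constraint set then forces each block to be individually optimal. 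The two arguments are logically equivalent here, but they buy different things: your version is more compact and sidesteps the subdifferential bookkeeping for the nonsmooth $C_s$ (the paper must write $\partial C_s(x^*)\ni\cdots$ in its stationarity condition), and it generalizes immediately to any number of participants; the paper's KKT version, on the other hand, explicitly constructs the dual certificates $(\overline{\gamma}^*,\underline{\gamma}^*,\overline{\mu}^*,\underline{\mu}^*,\overline{\nu}^*,\underline{\nu}^*,\theta^*,\omega^*)$ for each subproblem, which are reused in the uniqueness analysis of Theorem~\ref{theorem3}. One small point worth making explicit in your write-up: the step ``strong duality $\Rightarrow$ the primal optimum minimizes the Lagrangian at $\lambda^*$'' uses the chain $\mathcal{D}(\lambda^*)\le\mathcal{L}_1(g^*,u^*,x^*,\lambda^*)=f(g^*,u^*,x^*)=\mathcal{D}(\lambda^*)$, where the middle equality holds because $D+u^*-g^*=0$ at the primal optimum; stating this makes the argument self-contained.
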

\ifthenelse{\boolean{arxiv}}{
\begin{proof} 
Denote the dual variables associated with the constraints \eqref{SoC_evolution}, \eqref{SoC_terminal_const}, and \eqref{power_bal_const} as $\theta, \ (\omega_0,\omega_{T+1}), \mbox{ and }\lambda$, respectively. Further define $ (\overline{\mu},\underline{\mu}), \ (\underline{\nu},\overline{\nu}), \mbox{ and }(\overline{\gamma},\underline{\gamma})$ to be the non-negative dual variables associated with the inequality constraints \eqref{Soc_limit_ineq}\eqref{SoC_control_limit}\eqref{gen_limit_ineq}, respectively. The Lagrangian that relaxes all of the constraints is defined below:
\begin{eqnarray}
    &&\mathcal{L}(g,u,x,\lambda,\theta,\omega_{0},\omega_{T+1},\overline{\gamma},\underline{\gamma},\overline{\mu},\underline{\mu},\overline{\nu},\underline{\nu}) = C_s +\alpha_g g^Tg  \nonumber\\
    &&  + \beta_g \mathbf{1}^T g + \lambda^T(D+u-g)+ \theta^T(Ax-\frac{1}{E}u) + \omega_{0}(x^Te_1-x_o) \nonumber\\
    &&   +\omega_{T+1}(x^Te_{T+1}-x_o) + \overline{\gamma}^T(g-\overline{g})+\underline{\gamma}^T(\underline{g}-g) +\overline{\mu}^T(x-\mathbf{1}) \nonumber\\
    &&  +\underline{\mu}^T(-x) + \overline{\nu}^T(u-\overline{u})+\underline{\nu}^T(-\underline{u}-u).
\end{eqnarray}
The KKT conditions require 

stationarity:
\begin{subequations}
\begin{align}
    \left.\partial_g \mathcal{L}\right\rvert_{g=g^{*}}= & \ 2\alpha_g g^{*} + \beta_g\mathbf{1}  - \lambda^{*} + \overline{\gamma}^{*} - \underline{\gamma}^{*}= 0 \label{kkt_gen}\\
    \left.{\partial_x \mathcal{L}}\right\rvert_{x=x^{*}} = & \   \partial C_s(x^{*}) + A^T\theta^{*} + \omega_0^{*} e_1 \nonumber \\
    & \  + \omega_{T+1}^{*} e_{T+1} + \overline{\mu}^{*} - \underline{\mu}^{*} \ni 0 \label{kkt_str} \\
    \left.\partial_u \mathcal{L}\right\rvert_{u=u^{*}}= & \  \lambda^{*} - \frac{1}{E}\theta^{*}+\overline{\nu}^{*} - \underline{\nu}^{*} = 0 \label{kkt_str_control} 
\end{align}
\end{subequations}

\noindent
and complimentary slackness:
\begin{subequations}
\begin{eqnarray}
    &{\overline{\gamma}^{*}}^T (g^{*}  - \overline{g}) = 0 \quad  &{\underline{\gamma}^{*}}^T (\underline{g}- g^{*}) = 0  \label{complementary_gen}\\
    &{\overline{\mu}^{*}}^T (x^{*} - \mathbf{1}) = 0 \quad & {\underline{\mu}^{*}}^T x^{*} = 0 \label{complementary_str}\\
    & {\overline{\nu}^{*}}^T (u^{*} - \overline{ u}) = 0 & {\underline{\nu}^{*}}^T (-\underline{u} - u^{*}) = 0. \label{complementary_str_control}
\end{eqnarray}
\end{subequations}
Furthermore, the primal feasibility is given by the constraints \eqref{Soc_limit_ineq}-\eqref{SoC_terminal_const},\,\eqref{gen_limit_ineq},\,\eqref{power_bal_const} in the economic dispatch problem \eqref{optm_problem}, while the dual feasibility requires non-negativity of the dual variables associated with the inequality constraints in  \eqref{Soc_limit_ineq},\,\eqref{SoC_control_limit},\ and \eqref{gen_limit_ineq}. 

In terms of generator subproblem \eqref{subproblem1}, given $\lambda^*$, the conditions \eqref{gen_limit_ineq},\,\eqref{kkt_gen},\,\eqref{complementary_gen} and non-negativity of $(\overline{\gamma},\underline{\gamma})$ jointly account for its KKT conditions. Therefore, $g^*$ is also optimal with respect to generator subproblem \eqref{subproblem1}, given $\lambda^*$.
An analogous analysis applies to storage subproblem \eqref{subproblem2}.
\end{proof}
}
{{The proof is provided in \cite{BYGM20}.}} 
Theorem~\ref{theorem2} ensures that the dual pricing scheme is incentive compatible in the sense that given the market clearing prices, the market dispatch also maximizes the individual profits of the generator and the storage. In this way, all the participants will be fully incentivized.
\subsection{Uniqueness of Solution to Storage Subproblem}

In general, the storage subproblem \eqref{subproblem2} is not strictly convex. Therefore, its solution may not be unique, and the extension of this problem to potential distributed regimes of operation, control and market bidding is limited. We explore here conditions under which the solution to the storage subproblem is unique.

For ease of analysis, we use $\beta_b = 2$ as an approximation to the cycling cost coefficient $\beta_b$, which is empirically estimated to be $2.03$~\cite{shi2017optimal}. The cycling cost function in \eqref{str_cost} 
then reduces to
\begin{eqnarray}
    C_s(x) = \frac{\alpha_b BE}{2}x^TM(x)M(x)^Tx. \label{str_cost_simple}
\end{eqnarray}
We first propose a lemma that will enable us to obtain a sufficient condition for unique solution to storage subproblem.  
\begin{lemma}
Given any incidence matrix $N \in \mathbb{R}^{(T+1)\times T}$ of rank $r$, $\mathrm{rank}([NN^T \;e_{T+1}^T\;e_1^T]^T) = \mathrm{rank}([N^T\;e_{T+1}^T\;e_1^T ]^T) = r'$ holds with $r'$ bounded as $\min \{r+1,T+1\} \leq r'\leq \min \{r+2,T+1\}$.
Furthermore, if the rank of the incidence matrix $M(x)$ in \eqref{rainflow_matrix_def} is $r$, $\mathrm{rank}([M(x)M(x)^T e_{T+1}^T e_1^T]^T) = r+1 $.
\label{lemma1}
\end{lemma}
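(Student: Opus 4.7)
The plan is to address the two claims in sequence, leveraging two structural facts about incidence matrices: first, the row spaces of $NN^T$ and $N^T$ coincide; second, every column of an incidence matrix sums to zero, so its column space lies in $\mathbf{1}^\perp$.

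For the first part, I would begin by observing that each row of $NN^T$ is a linear combination of rows of $N^T$, so $\mathrm{row}(NN^T) \subseteq \mathrm{row}(N^T)$. Since $\mathrm{rank}(NN^T) = \mathrm{rank}(N) = r = \dim \mathrm{row}(N^T)$, these subspaces coincide. Appending the same two rows $e_{T+1}^T$ and $e_1^T$ therefore yields matrices with identical row spaces, giving the claimed equality of ranks. The upper bound $r' \leq \min\{r+2, T+1\}$ is immediate, since augmenting by two rows increases the rank by at most two and the ambient dimension is $T+1$. For the lower bound $r' \geq \min\{r+1, T+1\}$, I would use $\mathbf{1}^T N = 0$ (every column of an incidence matrix has entries summing to zero) to conclude $\mathrm{row}(N^T) = \mathrm{col}(N) \subseteq \mathbf{1}^\perp$. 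Since $\mathbf{1}^T e_1 = \mathbf{1}^T e_{T+1} = 1$, neither basis vector lies in $\mathbf{1}^\perp$, so adjoining either one strictly enlarges the row space, forcing $r' \geq r+1$.

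For the second part, by Part 1 it suffices to show $r' \leq r+1$ for the particular $M(x)$, which is equivalent to establishing that $e_{T+1} - e_1 \in \mathrm{col}(M(x))$ (any element $e_{T+1} - c\,e_1$ lying in $\mathrm{row}(M(x)^T)$ must have $c=1$ to be orthogonal to $\mathbf{1}$). I would prove this by tracing Algorithm \ref{alg:rainflow}: the full-cycle extraction loop only removes $S_r[j]$ and $S_r[j+1]$ for $j \in \{2,\dots,|S_r|-2\}$, so the endpoints $S_r[1]=0$ and $S_r[|S_r|]=T$ persist until termination. Consequently, the residual edges added in steps \ref{rex_graph_start}-\ref{res_graph_end} of Algorithm \ref{alg:incidence_matrix} form a walk whose underlying node sequence $0 = s_0, s_1, \ldots, s_k = T$ connects node $0$ to node $T$. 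Each residual column of $M(x)$ has the form $\pm(e_{s_{i+1}} - e_{s_i})$, so orienting all residual edges forward along the walk by choice of sign yields a telescoping sum equal to $e_T - e_0$, i.e., $e_{T+1} - e_1$ in the paper's indexing convention. This places $e_{T+1} - e_1$ in $\mathrm{col}(M(x))$ and completes the argument.

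The main obstacle I anticipate is the bookkeeping in Part 2: one must carefully track the sign convention used in $\mathrm{dir}(x, S_r, j)$ to confirm that each residual column admits the sign assignment producing $e_{s_{i+1}} - e_{s_i}$ along the residual walk. Once this is verified, both parts follow from elementary linear algebra.
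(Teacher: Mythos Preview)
Your proposal is correct. The paper offers only a one-sentence sketch (``the formula for the rank of a product of matrices and the fact that the standard basis vectors $e_1$ and $e_{T+1}$ can add at least one and at most two independent rows''), and your Part~1 argument is exactly what that sketch gestures at: $\mathrm{row}(NN^T)=\mathrm{row}(N^T)$ via $\mathrm{rank}(NN^T)=\mathrm{rank}(N)$, the upper bound by counting appended rows, and the lower bound via $\mathrm{col}(N)\subseteq\mathbf{1}^\perp$. For Part~2 you go beyond the paper by supplying an explicit construction: observing that Algorithm~\ref{alg:rainflow} never removes the endpoints $0$ and $T$ from $S_r$, so the residual edges in Algorithm~\ref{alg:incidence_matrix} form an undirected path from node $0$ to node $T$, and then telescoping the corresponding (sign-adjusted) columns to obtain $e_{T+1}-e_1\in\mathrm{col}(M(x))$. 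Combined with the orthogonality forcing $c=1$ in $e_{T+1}-ce_1$, this pins down $r'=r+1$. The paper's sketch does not spell out this residual-path argument, so your proof is more complete while remaining consistent with the intended approach.
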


The proof uses the formula for the rank of a product of matrices and the fact that the standard basis vectors ($e_1$ and $e_{T+1}$) can add at least one and at most two independent rows to the conjoined matrix.

The following theorem characterizes the sufficient condition for a unique solution to the storage subproblem \eqref{subproblem2}.
\begin{theorem}\label{theorem3}
 Assume that storage cycling cost function $C_s(x)$ takes form of \eqref{str_cost_simple} and is differentiable in small neighbourhood of $x^*$, where $x^*$ is optimal for  the economic dispatch problem \eqref{optm_problem}. Assume that all inequality constraints are satisfied strictly. If the rank of the incidence matrix $M(x^*) 
 $ is $T$, then the storage subproblem has a unique solution. 
\end{theorem}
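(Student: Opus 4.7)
My plan is to reduce the storage subproblem to a locally quadratic, equality-constrained program near the optimum, and then translate the rank condition of Lemma \ref{lemma1} into strict convexity on the tangent space, which by convexity (Theorem \ref{theorem1}) yields global uniqueness.

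First, since every inequality constraint in $\mathcal S$ is strictly satisfied at $(u^*,x^*)$, there is a neighborhood of $(u^*,x^*)$ in which the only binding constraints are the equalities $Ax=\tfrac{1}{E}u$, $x^Te_1=x_o$, and $x^Te_{T+1}=x_o$. Moreover, differentiability of $C_s$ on a neighborhood of $x^*$ means that none of the ``if'' branches inside Algorithms \ref{alg:rainflow}--\ref{alg:incidence_matrix} switch there, so $M(x)\equiv M(x^*)$ locally, and on this neighborhood
\begin{equation*}
C_s(x)=\tfrac{\alpha_b BE}{2}\, x^T M(x^*)M(x^*)^T x.
\end{equation*}
The storage subproblem \eqref{subproblem2} is therefore locally a smooth quadratic program in $(u,x)$ with only the three equality constraints above active.

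Second, I would eliminate $u$ using $u=EAx$, which is forced by \eqref{SoC_evolution}, so the local problem is a quadratic in $x$ alone, with objective Hessian $\alpha_b BE\,M(x^*)M(x^*)^T$ and linear equality constraints $e_1^T x=e_{T+1}^T x=x_o$. A standard second-order argument shows that $x^*$ is a strict local minimum precisely when $M(x^*)M(x^*)^T$ is positive definite on the tangent subspace
\begin{equation*}
\mathcal T:=\bigl\{\delta x\in\mathbb R^{T+1}:e_1^T\delta x=e_{T+1}^T\delta x=0\bigr\}.
\end{equation*}
Since $M(x^*)M(x^*)^T$ is already positive semidefinite, this positive definiteness is equivalent to $\ker(M(x^*)M(x^*)^T)\cap\mathcal T=\{0\}$, which in turn is equivalent to the stacked matrix $\bigl[M(x^*)M(x^*)^T;\, e_1^T;\, e_{T+1}^T\bigr]^T$ having full column rank $T+1$.

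Third, this is exactly the conclusion of Lemma \ref{lemma1} applied to $N=M(x^*)$ with $r=T$: the rank of the stacked matrix equals $r+1=T+1$. Hence the Hessian restricted to $\mathcal T$ is positive definite, so $x^*$ is a strict local minimizer of the (locally quadratic) storage subproblem. By Theorem \ref{theorem1} the subproblem \eqref{subproblem2} is convex globally, and a strict local minimizer of a convex problem is the unique global minimizer; then $u^*=EAx^*$ is uniquely determined from \eqref{SoC_evolution}.

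The main obstacle is the bookkeeping around the local-versus-global nature of the argument: the objective is only a clean quadratic on a neighborhood where $M(\cdot)$ is frozen, so I need to invoke convexity of $C_s$ (Theorem \ref{theorem1}) to promote strict local optimality to global uniqueness. A secondary subtlety is confirming that reducing to $x$ via $u=EAx$ does not enlarge the set of minimizers, which follows because $A$ has full row rank so this elimination is a bijection between feasible perturbations.
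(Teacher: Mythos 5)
Your argument is correct, but it reaches the conclusion by a different route than the paper. The paper's proof writes out the full KKT systems of both the dispatch problem and the storage subproblem, substitutes the market-clearing price $\lambda^*$ from the dispatch stationarity conditions, and assembles the stationarity equation together with the two terminal constraints into one linear system whose coefficient matrix is $\bigl[\alpha_b BE\, M(x_s^*)M(x_s^*)^T;\, e_{T+1}^T;\, e_1^T\bigr]^T$; Lemma~\ref{lemma1} then gives full column rank and hence a unique $x_s^*$. You instead freeze $M(\cdot)$ locally, reduce to a quadratic program in $x$ alone, and show that the Hessian $\alpha_b BE\,M(x^*)M(x^*)^T$ is positive definite on the tangent space of the terminal constraints --- which is exactly the same rank fact from Lemma~\ref{lemma1}, repackaged as strict convexity along feasible directions --- and then promote strict local minimality to global uniqueness via the convexity of $C_s$ (Theorem~\ref{theorem1}). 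The two proofs hinge on the identical linear-algebraic kernel condition, but your framing has a concrete advantage: it never needs the right-hand side of the KKT system (which in the paper involves the subproblem multipliers $\omega_{0,s}^*,\omega_{T+1,s}^*$) to be well determined, and it handles explicitly the local-versus-global issue that the objective is only a fixed quadratic where $M(\cdot)$ does not switch. One small point to make explicit: your second-order argument presupposes that $x^*$ is a stationary point (indeed a minimizer) of the storage subproblem at the price $p=\lambda^*$, which is not automatic from optimality in \eqref{optm_problem} --- you should invoke Theorem~\ref{theorem2} for this, as the paper does.
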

\ifthenelse{\boolean{arxiv}}{
\begin{proof}
In this proof, the subscript $s$ refers to the solution to the 
storage subproblem \eqref{subproblem2}. 
Using the assumption that the cycling cost function~\eqref{str_cost_simple} is differentiable in a small neighbourhood of $x^{*}$, $M(x^{*})$ is fixed in that neighbourhood. 
We define the Lagrangian of the economic dispatch problem \eqref{optm_problem} to be 
\begin{eqnarray}
    &&\mathcal{L}(g,u,x,\lambda,\theta,\omega_{0},\omega_{T+1})= \alpha_g g^Tg + \beta_g \mathbf{1}^T g \nonumber\\
    &&  +\frac{\alpha_b BE}{2}x^TM(x)M(x)^Tx + \lambda^T(D+u-g)+ \theta^T(Ax-\frac{1}{E}u) \nonumber\\
    &&  + \omega_{0}(x^Te_1-x_o) +\omega_{T+1}(x^Te_{T+1}-x_o)
\end{eqnarray}
The KKT conditions can be written as

\noindent
stationarity:
\begin{align}
    \left.\partial_g \mathcal{L}\right\rvert_{g=g^{*}} = & \ 2\alpha_g g^{*} + \beta_g\mathbf{1}  - \lambda^{*} = 0 \nonumber \\
   \left.\partial_x \mathcal{L}\right\rvert_{x=x^{*}} = & \ \alpha_b BE M(x^{*})M(x^{*})^T x^{*} + A^T\theta^{*}  \nonumber \\
   &+ \omega_{0}^{*}e_1+\omega_{T+1}^{*}e_{T+1} = 0  \nonumber  \\
     \left.\partial_u \mathcal{L}\right\rvert_{u=u^{*}} = & \ \lambda^{*} - \frac{1}{E}\theta^{*} = 0 \nonumber
\end{align}

\noindent
primal feasibility:
\begin{align}
     &\left.\partial_{\lambda} \mathcal{L}\right\rvert_{\lambda=\lambda^{*}} = \ D + u^{*} - g^{*} = 0\nonumber\\
      &\left.\partial_{\theta} \mathcal{L}\right\rvert_{\theta=\theta^{*}}  = \ Ax^{*} -  \frac{1}{E}u^{*} =0 \nonumber\\
      &\left.\partial_{\omega_{0}} \mathcal{L} \right\rvert_{\omega_0=\omega_0^{*}} = \ e_1^Tx^{*} - x_o =0  \nonumber\\
      &\left.\partial_{\omega_{T+1}} \mathcal{L}\right\rvert_{\omega_{T+1}=\omega_{T+1}^{*}} = \  e_{T+1}^Tx^{*} - x_o = 0 \nonumber
\end{align}
which imply
\begin{equation}
     \lambda^{*} = 2\alpha_g (D+EAx^{*}) + \beta_g\mathbf{1} \label{kkt_mkt_lmp} ,
\end{equation}
and 
\begin{align}
     & \ \alpha_b BE M(x^{*})M(x^{*})^Tx^{*} + 2E^2\alpha_g A^TAx^{*} + \omega_{0}^{*} e_1  +\omega^{*}_{T+1} e_{T+1}  \nonumber \\
      &= \ -\beta_g EA^T\mathbf{1} -2\alpha_g EA^TD  . \label{kkt_mkt}
\end{align}
\eqref{kkt_mkt_lmp} and \eqref{kkt_mkt} are the sets of equations to solve for the optimal market clearing prices and the optimal storage SoC profile $x^{*}$ from the market perspective, respectively. 

The Lagrangian for the storage subproblem is defined as 
\begin{eqnarray}
    &&\mathcal{L}_s(u_s,x_s,\theta_s,\omega_{0,s},\omega_{T+1,s}) =\frac{\alpha_b BE}{2}x_s^TM(x_s)M(x_s)^Tx_s   \nonumber\\
    &&  + \lambda^Tu_s + \theta_s^T(Ax_s-\frac{1}{E}u_s) + \omega_{0,s}(e_1^Tx_s-x_o) \nonumber \\
    && +\omega_{T+1,s}(e_{T+1}^Tx_s-x_o)
\end{eqnarray}
The KKT conditions for the storage subproblem can be similarly written as

\noindent
stationarity:
\begin{align}
     \left.\partial_{x_s} \mathcal{L}_s\right\rvert_{x_s=x_s^{*}}=  &\ \alpha_b B E M(x_s^{*})M(x_s^{*})^Tx^{*}_s + A^T\theta^{*}_s \nonumber\\ &+\omega_{0,s}^{*}e_{1}+\omega_{T+1,s}^{*}e_{T+1}  = 0  \nonumber\\
    \left.\partial_{u_s} \mathcal{L}_s \right\rvert_{u_s=u_s^{*}} =  &\ \lambda^{*} - \frac{1}{E}\theta^{*}_s = 0 \nonumber
\end{align}

\noindent
primal feasibility:
\begin{align}
    &\left.\partial_{\theta_s} \mathcal{L}_s\right\rvert_{\theta_s=\theta_s^{*}}=  \ Ax^{*}_s - \frac{1}{E}u^{*}_s =0 \\
    &\left.\partial_{\omega_{0,s}} \mathcal{L}_s\right\rvert_{\omega_{0,s}=\omega_{0,s}^{*}} =  \ e_1^Tx^{*}_s - x_o =0 \label{sub_kkt_terminal 1}\\
    &\left.\partial_{\omega_{T+1,s}} \mathcal{L}_s\right\rvert_{\omega_{T+1,s}=\omega_{T+1,s}^{*}}=  \  e_{T+1}^Tx^{*}_s - x_o =0 \label{sub_kkt_terminal 2}
\end{align}
Substituting $\lambda^{*}$ from \eqref{kkt_mkt_lmp} in the above, we attain
\begin{align}
     \alpha_b BE  M(x^{*}_s)M(x^{*}_s)^Tx^{*}_s =& - 2E\alpha_g A^TAx^{*} -\omega_{0,s}^{*}e_{1} -\omega_{T+1,s}^{*}e_{T+1} \nonumber\\ & -\beta_g EA^T\mathbf{1} -2\alpha_g EA^TD , \label{kkt_str_sub}
\end{align}
which, combined with the primal feasibility constraint \eqref{sub_kkt_terminal 1}\eqref{sub_kkt_terminal 2}, yields the set of equations to solve for $x^{*}_s$:
\begin{equation}
    \begin{bmatrix}\alpha_b BE M(x^{*}_s)M(x^{*}_s)^T\\e_{T+1}^T\\e_1^T \end{bmatrix}x^{*}_s = \begin{bmatrix} h(x^{*},\omega^{*}_{0,s},\omega^{*}_{T+1,s})\\ x_o\\x_o\end{bmatrix} \label{sub_clsd_frm},
\end{equation}
where $h(x^{*},\omega^{*}_{0,s},\omega^{*}_{T+1,s})$ summarizes the R.H.S of \eqref{kkt_str_sub}.
Given Theorem \ref{theorem2}, $x^*$ from the market economic dispatch problem should always be a solution to \eqref{sub_clsd_frm}. In that case, Lemma~\ref{lemma1} suggests the matrix on the L.H.S of \eqref{sub_clsd_frm} is full column rank as the rank\footnote{Note that $M(x^{*}_s)$ of rank $T$ is unique.} of matrix $M(x^{*}_s)$ will be $T$,  
which guarantees a unique $x^*_s$.
Therefore, $x_s^*$ is unique for the storage subproblem. 
\end{proof}
}
{{The proof is provided in \cite{BYGM20}.}} 
The cases in which Theorem~\ref{theorem3} holds are restrictive in the sense that there is no cycle in the SoC profile $x$ and the profile switches between charging and discharging half-cycles at every time slot. 
It suggests that in general market price signals are not sufficient to align individual participant incentives with economic dispatch objective in fully distributed regimes.
\textcolor{black}{The above results can also be generalized to economic dispatch problems with multiple generators and storage units, due to the convexity cycling cost.}


\section{Numerical Simulation} \label{Section V}

In this section we present numerical results using aggregate demand data for a single day for one zone operated by the NYISO as an illustrative example (date: 3/9/2020, Zone H)~\cite{nyisodata}. For the generator in our setup, we use the cost coefficients $\alpha_g = 0.1$ and $\beta_g = 20$ in equation \eqref{optm_obj} that correspond to the average cost coefficients from the IEEE 300-bus system~\cite{pyou,matpower}.
We assume that the generator has sufficient capacity to meet the peak demand, i.e. $\overline{g} \ge \max_t \{D_t\}$ with $\underline{g} = 0$.
The storage cycling cost coefficients are set to $\alpha_b = 5.24 \times 10^{-4}$ and $\beta_b = 2.03$, which correspond to empirically determined values based on historical data \cite{shi2017optimal}. 
The power rating of the storage is given by 
$\overline{u} = \frac{E}{4}$ and $\underline{u} = -\frac{E}{4}$.

We compute results for three dispatch strategies \textcolor{black}{to gain more insight into the importance of accounting for storage degradation in economic dispatch}:
\begin{itemize}
    \item Generation Centric Dispatch (\emph{GCD}): Co-optimization of generation and storage operation that accounts for only generation cost i.e. eliminating storage degradation cost from the objective function in~\eqref{optm_obj}. This leads to a dispatch strategy that is unaware of the cycling cost associated with storage use (i.e. storage degradation is a hidden cost that is computed afterwards from the optimal storage profile). We then define the total cost = generation cost + \emph{hidden} cycling cost\footnote{These hidden costs may represent the notion of uplift payment necessary to incentivize storage participation.};
    \item Storage Degradation Aware Dispatch (\emph{SDAD}): Co-optimization of generation and storage operation that accounts for both generation cost and storage cycling cost, i.e. problem \eqref{optm_problem}, in this case total cost = generation cost + cycling cost;
    \item Generator Dispatch (\emph{GD}): Optimization of the generator profile i.e. there is no storage and the total cost = generation cost. 
\end{itemize}

\begin{figure}[ht]
    \centering
    \includegraphics[width=0.48\textwidth]{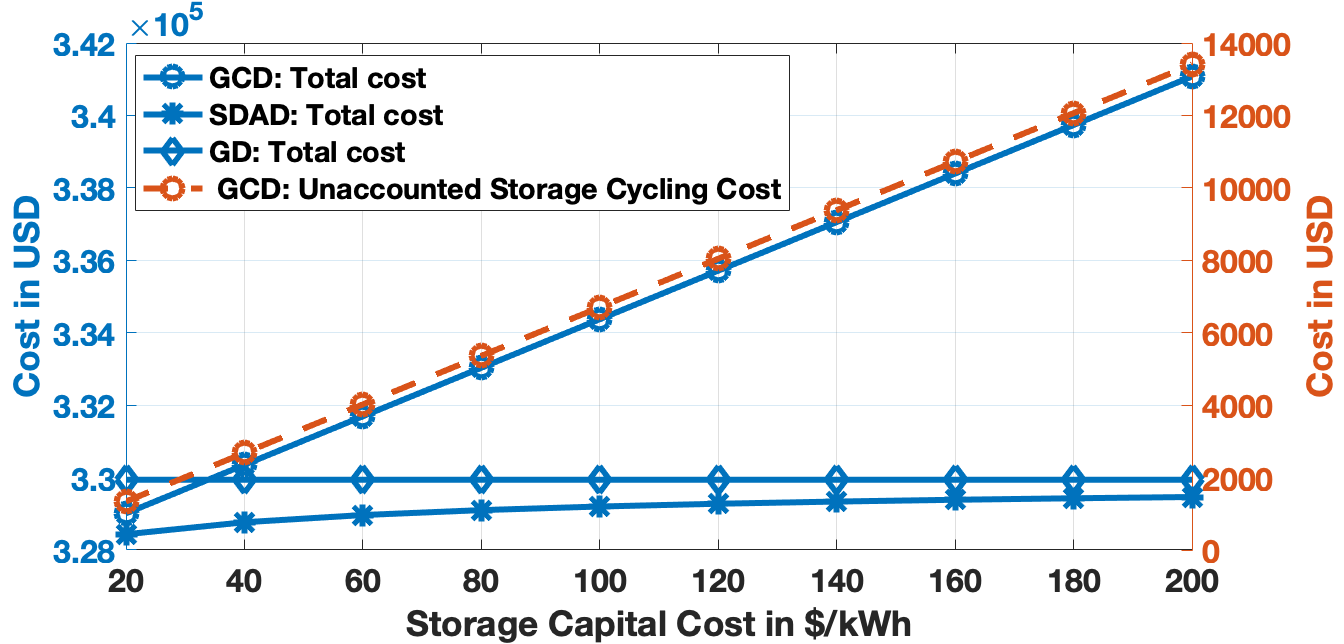}
     \caption{Total cost of GCD, SDAD and GD, and Unaccounted storage cycling cost of GCD w.r.t. storage capital cost. }
     \label{fig:cost_vs_B}
\end{figure}

Fig.~$\ref{fig:cost_vs_B}$ compares the total cost of the dispatch strategies as we increase the storage capital cost given a fixed storage capacity. Here we fix the storage capacity to be $E = 500 MWh$ ($7.65\%$ storage penetration w.r.t. daily energy demand). As expected, our approach SDAD gives the minimum total cost amongst the three, while GCD performs worst, especially as the storage capital costs increase, as this increases the hidden cycling costs that are not taken to account for in the dispatch decisions. The dashed line curve in Fig.~\ref{fig:cost_vs_B} right y-axis more explicitly shows how the unaccounted for storage cycling cost deteriorates the performance of GCD.

\begin{figure}[ht]
    \centering
    \includegraphics[width=0.48\textwidth]{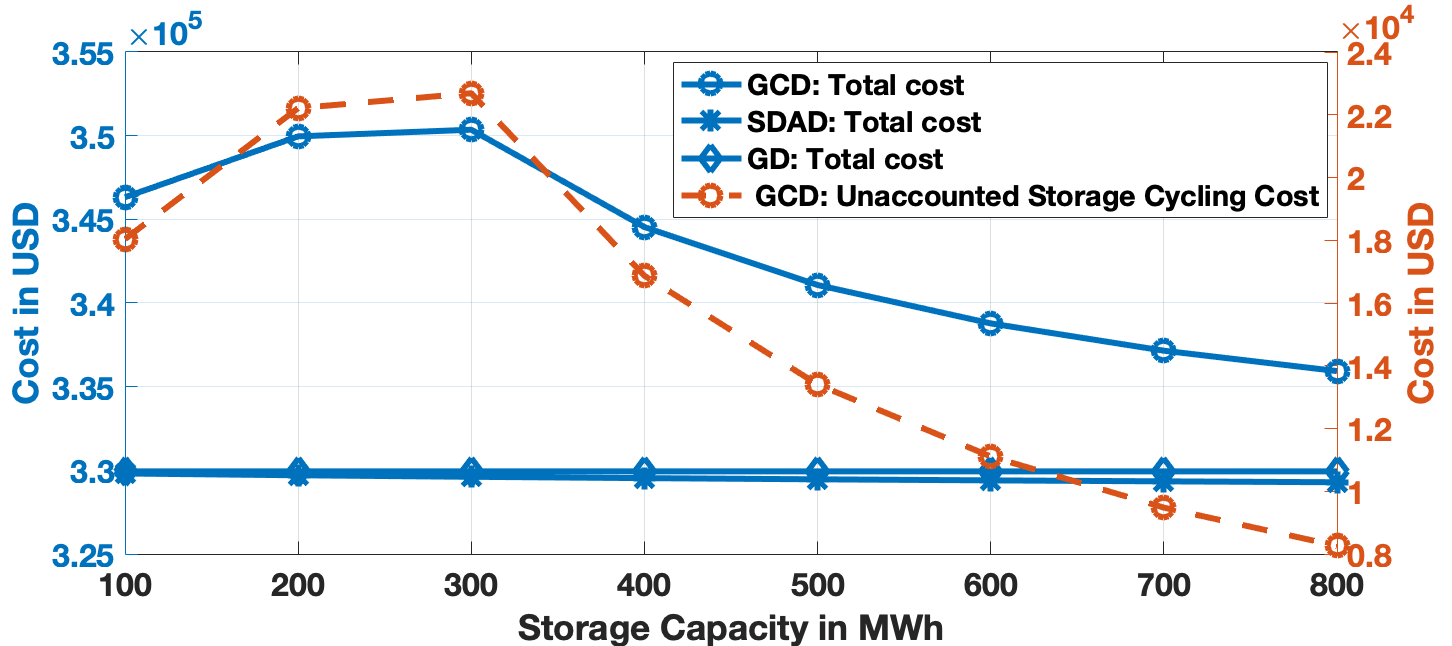}
    \caption{Total cost of GCD, SDAD and GD, and Unaccounted storage cycling cost of GCD w.r.t. storage capacity. }
    \label{fig:cost_vs_E}
\end{figure}

Fig.~\ref{fig:cost_vs_E} illustrates the impact of storage capacity on the total cost of the dispatch strategies with fixed capital cost $B$. In these results we use $B = 200 \$/kWh$, which corresponds to current estimated lithium-ion battery costs~\cite{mongird2019energy}. Despite the small difference in the total cost between SDAD and GD, the difference tends to increase with increasing storage capacity, meaning more savings with larger storage units. The total cost of SDAD decreases since the storage is able to supply the required power with shallower cycle depths, thus incurring lower degradation cost.  
In this comparison, GCD is again worst amongst all, though it improves as storage capacity grows, since the storage can supply the same amount of power with fewer cycles and shallower cycle depths. 
The unaccounted storage cycling cost for GCD is shown with dashed line curve in Fig.~\ref{fig:cost_vs_E} right y-axis. 
\begin{figure}[ht]
    \centering
    \includegraphics[width=0.45\textwidth]{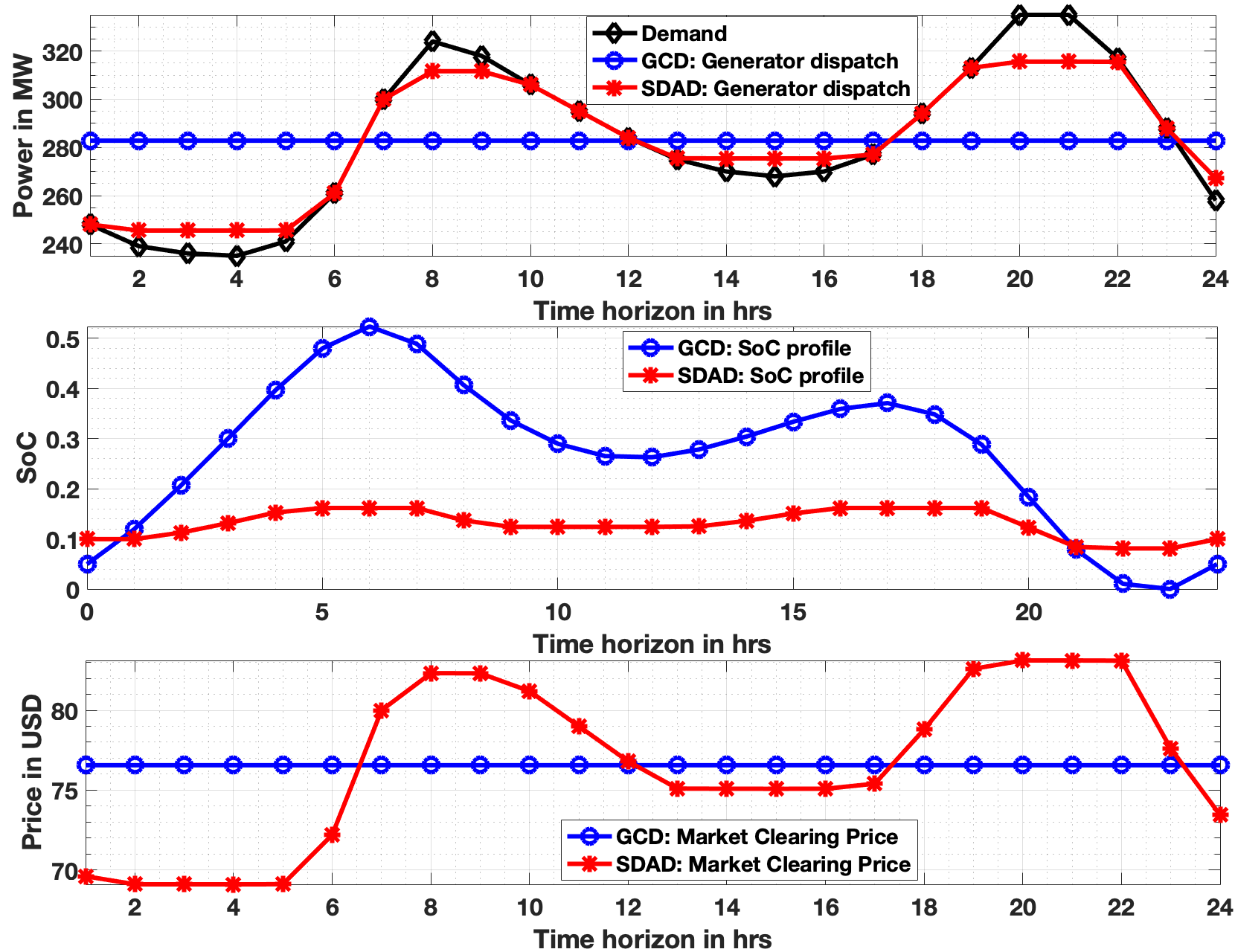}
    \caption{(Top) Demand and generation schedule of GCD, and SDAD; (Middle) SoC profile of GCD, and SDAD; (Bottom) Market clearing price of GCD, and SDAD}
    \label{fig:soc_lambda}
\end{figure}

We now fix both storage capacity to $E = 500 MWh$, and storage capital cost to $B = 200$/kWh and examine the optimal dispatch obtained under GCD and SDAD. The top and middle panels in Fig.~\ref{fig:soc_lambda} compare the optimal dispatch profile for generator and storage, respectively. As expected, our approach SDAD gives shallower depths due to degradation cost of storage while GCD utilizes storage without any restrictions. The market clearing price for GCD is flat due to this unrestricted utilization and sufficient storage capacity in the market. On the other hand SDAD results in time-varying market clearing prices as shown in the bottom panel in Fig.~\ref{fig:soc_lambda} because it takes into account the storage degradation as well as the generation costs.

These examples illustrate the need for an approach like SDAD that accounts properly for the hidden cycling cost of storage in order to take into account for the total cost of the system dispatch. 



\section{Conclusions} \label{Section VI}

In this paper, we formulate and analyze an economic dispatch problem that intrinsically accounts for cycle-based storage degradation costs. Using the convexity of this cycling cost, we show that the optimal economic dispatch along with dual pricing is incentive compatible, i.e., individual participants  attain maximum profits given market clearing prices. Further, with a digraph interpretation of the Rainflow algorithm, we provide a rank condition on the graph incidence matrix, which linearly maps a SoC profile to half-cycle depths, to guarantee the uniqueness of the optimal storage response to market clearing prices.
Numerical examples illustrate that accounting for storage degradation addresses a potential market inefficiency for storage participation due to the large unaccounted for storage operational costs in traditional economic dispatch formulations.

\ifthenelse{\boolean{arxiv}}{

\appendices
\begin{section}
{Proof of Theorem~\ref{theorem1}}
We present four lemmas and one proposition first. 
We define two vectors $d_c\in \mathbb{R}^T$ and $d_d\in \mathbb{R}^T$ based on depth vector $d$, such that $d_c$ contains charging half-cycle depths in decreasing order and $d_d$ contains discharging half-cycle depths in decreasing order. We append zeros in the tail to fill in the vectors as necessary. Accordingly, we can define digraphs $\mathcal{G}_c$ and $\mathcal{G}_d$ for charging and discharging half-cycles, respectively, based on $\mathcal{G}$, and the associated incidence matrices $M_c(x) \in \mathbb{R}^{(T+1)\times T}$ and $M_d(x)\in \mathbb{R}^{(T+1)\times T}$ based on $M(x)$, attached with zero columns as necessary, such that $M_c(x)^T x = d_c $ and $M_d(x)^Tx =d_d$. Furthermore, we rearrange elements (rows) of depth vector $d$ (incidence matrix $M(x)$) in decreasing order to maintain $M(x)^T x = d$. As an illustrative example, the incidence matrices $M_c(x)$ and $M_d(x)$ for the SoC profile in Fig.~\ref{fig:graph_example} is 
\begin{footnotesize}
\begin{align*}
M_c(x) = \begin{bmatrix} -1&0&0&0&0\\0&1&0&0&0\\0&0&-1&0&0\\0&0&1&0&0\\0&-1&0&0&0\\1&0&0&0&0\end{bmatrix} , \
M_d(x) = \begin{bmatrix} 0&0&0&0&0\\1&0&0&0&0\\0&-1&0&0&0\\0&1&0&0&0\\-1&0&0&0&0\\0&0&0&0&0\end{bmatrix} .
\end{align*}
\end{footnotesize}
%

We will focus our analysis on charging half-cycles, including residual half-cycles and those originated by full cycles. The analysis for discharging half-cycles is analogous. 
Recall the full cycle set $S_f$ of \emph{time index pairs} and the residue set $S_r$ of \emph{time indices} that Algorithm~\ref{alg:rainflow} outputs. 
Also, for any edge $(u,v)$ of a digraph, node $u$ is at the tail of the edge, and node $v$ is at the head of the edge. A source (sink) is defined as a node of a digraph, which is at the tail (head) of all the edges associated with that node~\cite{FB-LNS}. Each time node $t \in  \{0,1,...,T\}$ is a source, or a sink, of at most one charging half-cycle. 
Therefore, 
$[M_c \mathbf{1}]_i \in  \{-1,0,+1\} $ holds for $\forall  i \in \{1,2,...,T+1\}$. 
\begin{definition}
A directed edge between nodes $t_1$ and $t_4$ is said to envelop another edge between nodes $t_2$ and $t_3$, if $t_1<t_2<t_3<t_4$ holds, where $t_i \in \{0,1,...,T\}\ \forall i \in \{1,2,3,4\}$ are four nodes of the digraph.
\end{definition}
\begin{remark}
Given $(\Delta_{j-1},\Delta_j,\Delta_{j+1})= \diff(x,S_r,j)$, assume that a full cycle exists, i.e., $\Delta_{j-1} \geq \Delta_{j}$, and $\Delta_{j+1} \geq \Delta_{j}$ holds. During the extraction of the full cycle, if the corresponding full cycle 
represented by the time indices $\{S_r[j],S_r[j+1]\}$ 
follows $x_{S_r}(j) > x_{S_r}(j+1)$, then the remaining profile denoted by the time indices $\{S_r[j-1],S_r[j+2]\}$ 
satisfies $x_{S_r}(j-1) < x_{S_r}(j+2)$, and vice versa. To see this, notice
\begin{align}
    &x_{S_r}(j) > x_{S_r}(j+1) \implies \Delta_j= -(x_{S_r}(j+1)-x_{S_r}(j)),\\
    &\Delta_{j-1}=x_{S_r}(j) - x_{S_r}(j-1)>0, \quad \text{ and }\\
    &\Delta_{j+1}=x_{S_r}(j+2) - x_{S_r}(j+1)> 0  
\end{align}
Thus, by using the fact that $\Delta_j$ was extracted we have
\begin{align*}
 x_{S_r}(j) - x_{S_r}(j-1) &\geq-(x_{S_r}(j+1)-x_{S_r}(j))\\
 x_{S_r}(j+2) - x_{S_r}(j+1)&\geq-(x_{S_r}(j+1)-x_{S_r}(j))
\end{align*}
The first one leads to $x_{S_r}(j-1)\leq x_{S_r}(j+1)$ and the second one $ x_{S_r}(j+2)\geq x_{S_r}(j)$. Thus, 
$$x_{S_r}(j-1)\leq x_{S_r}(j+1)<x_{S_r}(j)\leq x_{S_r}(j+2),$$
where the strict inequality is by assumption.

In other words, the half-cycle edge of the remaining profile envelop the associated full cycle edges in the opposite direction with respect to full cycle edges. 
\end{remark}



\begin{remark}\label{step_decomposition}
Any SoC profile $x$ can be written in terms of step functions, i.e. $x = \sum_{t = 0}^{T} p_{t}\mathbf{1}_{t}$ where, $p_{t} \in  \mathbb{R}$ is the amplitude and $\mathbf{1}_{t} \in  \mathbb{R}^{(T+1)}$ is defined as
\begin{align*}
    [\mathbf{1}_{t}]_i =
\left\{
	\begin{array}{ll}
		0  & \mbox{if } i < t \\
		1 & \mbox{if } i \ge t
	\end{array}
\right.
\end{align*}
\end{remark}

\begin{lemma}
\label{mcxlemma}
For any matrix $M_c(x)\in\mathbb{R}^{(T+1)\times T}$, the following holds,
\begin{align}
    \abs{(M_c(x)\mathbf{1})^T\mathbf{1}_{t}} \le 1, \ t \in \{0,1,...,T\} 
\end{align}
\end{lemma}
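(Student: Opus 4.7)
The plan is to reduce the inequality to a trivial statement about suffix sums of an alternating $\pm 1$ sequence. Observe first that $(M_c(x)\mathbf{1})^T\mathbf{1}_t = \sum_{i=t}^{T}[M_c(x)\mathbf{1}]_i$, so it suffices to show that the non-zero entries of $v := M_c(x)\mathbf{1}$, read in order of increasing time index, strictly alternate in sign, e.g., $+1,-1,+1,\dots$ or $-1,+1,-1,\dots$. Any suffix sum of such a sequence lies in $\{-1,0,+1\}$.

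First I would give each entry $v_i$ a structural interpretation. By the incidence-matrix convention used in Algorithm~\ref{alg:incidence_matrix} together with the direction rule $\mathrm{dir}$ (which places the tail of a charging edge at the higher-SoC endpoint and the head at the lower-SoC endpoint), $v_i=+1$ exactly when $i$ is the upper endpoint of some charging half-cycle in $\mathcal{G}_c(x)$, and $v_i=-1$ exactly when it is the lower endpoint. The already-noted fact that each node is incident to at most one charging edge (so that $[M_c\mathbf{1}]_i\in\{-1,0,+1\}$) rules out any cancellation within a single entry.

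Next I would match these $\pm 1$ contributions to local extrema of the SoC profile $x$. Every interior switching point of $x$ eventually becomes an endpoint of exactly one charging edge of $\mathcal{G}_c(x)$, either through being paired with an adjacent switching point during a full-cycle extraction in Algorithm~\ref{alg:rainflow}, or through remaining in the final residual $S_r$ and sitting between one charging and one discharging residual half-cycle. In either scenario, a local maximum (peak) of $x$ contributes $v_i=+1$ and a local minimum (valley) contributes $v_i=-1$. Since peaks and valleys of any profile strictly alternate in time, the interior contributions to $v$ are intrinsically alternating.

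The main obstacle, I expect, is handling the boundary indices $0$ and $T$, which appear only in residual half-cycles. Each of them contributes $\pm 1$ when its adjacent residual segment is charging and $0$ when it is discharging. A short case analysis on the initial and terminal direction of $x$ then shows that a non-zero $v_0$ must equal $-1$ (since the first interior non-zero entry is then a peak contributing $+1$) and a non-zero $v_T$ must equal $+1$ (since the last interior non-zero entry is then a valley contributing $-1$). Hence the boundary entries extend rather than break the alternation, the non-zero entries of $v$ form a sign-alternating sequence in time order, and every suffix sum therefore lies in $\{-1,0,+1\}$, which yields the claim.
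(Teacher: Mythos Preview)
Your proposal is correct and reaches the same endpoint as the paper---namely, that the non-zero entries of $v=M_c(x)\mathbf{1}$ alternate in sign along the time axis, so every suffix sum lies in $\{-1,0,+1\}$---but the route is genuinely different. The paper argues \emph{edge-wise}: it takes any two consecutive charging half-cycle edges and classifies their relative position into three cases (sequential, crossing, enveloping), rules out the crossing case by a monotonicity contradiction, and then checks directly that in the remaining two cases the four endpoint signs must alternate. You argue \emph{node-wise}: you identify each non-zero entry $v_i$ with the peak/valley type of the switching point $i$ in the original profile, and then invoke the trivial fact that peaks and valleys of any scalar sequence alternate.

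What your approach buys is a cleaner reduction: once the correspondence ``peak $\leftrightarrow +1$, valley $\leftrightarrow -1$'' is established, the alternation is immediate and no edge-configuration case analysis is needed; the boundary cases for $t=0,T$ are also handled uniformly. The price is that the correspondence itself requires one structural fact you use implicitly: when a full cycle is extracted at two points that are adjacent only in the \emph{current} residual $S_r$ (not necessarily in the original profile), the higher-SoC endpoint is still a peak of the \emph{original} $x$. This follows from the preservation of extremum type under extraction (essentially the content of the Remark preceding the lemma, which shows the residual profile remains alternating and retains the original peak/valley labels), so it is not a gap, but it is worth stating explicitly. The paper's edge-centric proof hides the same dependency inside its unproved assertion that ``opposite direction is possible only when one charging half-cycle envelops another,'' so neither argument is more self-contained than the other; yours is simply more transparent about what is being used.
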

\begin{proof}
For any charging half-cycle edge between two time nodes $t_s$ and $t_e$, where the directed edge is either $(t_s,t_e)$, or $(t_e,t_s)$, assume w.l.o.g. $t_e > t_s$.  
Now, consider two such consecutive charging half-cycles edges between time nodes $t_{s,1},t_{e,1},$ and $t_{s,2},t_{e,2}$ respectively, then there are three possible cases,

\begin{itemize}
    \item Case 1: $t_{s,1}<t_{e,1}<t_{s,2}<t_{e,2}$, i.e. charging half-cycle edges are sequential to each other. 
    In this case, both edges should have same direction, as opposite direction is possible only when one charging half-cycle envelop another charging half-cycle.  Therefore,  $sgn([M_c\mathbf{1}]_i) = -sgn( [M_c \mathbf{1}]_{i+1}), \ i \in \{t_{s,1},t_{e,1},t_{s,2},t_{e,2}\}$, so $\abs{(M_c(x)\mathbf{1})^T\mathbf{1}_{t}} \le 1, \ t \in \{t_{s,1},t_{e,1},t_{s,2},t_{e,2}\} $.
    
    \item Case 2: $t_{s,1}<t_{s,2}<t_{e,1}<t_{e,2}$, i.e. charging half-cycle edges cross each other. 
    Charging half-cycle edges should have same direction, as the half-cycle doesn't envelop each other. Furthermore, by contradiction, if both have same direction, then $\left(x(t_{s,1}), x(t_{s,2}), x(t_{e,1})\right)$ is monotonic, and $t_{s,2}$ cannot be part of any half-cycle edge, as in step 1-5 in Algorithm~\ref{alg:rainflow}, which is contradictory to the fact that $\{t_{s,2},t_{e,2}\}$ is a charging-half cycle. Hence, this case is infeasible.  
    
    \item Case 3: $t_{s,1}<t_{s,2}<t_{e,2}<t_{e,1}$, i.e. one charging half-cycle envelop another charging half-cycle. By contradiction, if both have same direction, then $\left(x(t_{s,1}), x(t_{s,2}), x(t_{e,2})\right)$ is monotonic, and $t_{s,2}$ cannot be part of any half-cycle edge, as in step 1-5 in Algorithm~\ref{alg:rainflow}, which is contradictory to the fact that $\{t_{s,2},t_{e,2}\}$ is a charging-half cycle. Therefore both edges have opposite direction, and, $sgn([M_c\mathbf{1}]_i) =  -sgn( [M_c \mathbf{1}]_{i+1}), \ i \in \{t_{s,1},t_{e,1},t_{s,2},t_{e,2}\}$, so $\abs{(M_c(x)\mathbf{1})^T\mathbf{1}_{t}} \le 1, \ t \in \{t_{s,1},t_{e,1},t_{s,2},t_{e,2}\} $.
\end{itemize}
From above, any two consecutive charging half-cycles cannot intercept and the result could generalize with both cases 1 and 3 for any possible combination of charging half-cycles.
\end{proof}

\begin{definition}
A SoC profile $x$ is a boundary profile, if $\nexists$ $\epsilon > 0$ such that, $\forall$  
$\abs{q_t}\le \epsilon, \ t \in \{0,1,...,T\}$, the relation $M(x+ q_t\mathbf{1}_t) = M(x)
$ holds.
\end{definition}
\begin{remark}
For any arbitrary non-boundary profile $x$, $\exists$ $\epsilon > 0$, such that $\forall$ $\abs{q_t}<\epsilon, \ t \in \{0,1,...,T\}$, the relation $M(x+ q_t\mathbf{1}_t) = M(x)$ holds . 
\end{remark}

\begin{lemma}
Given a boundary profile $x$, $\exists$  $\epsilon>0$ small enough, such that $\forall$ $\abs{q_t}\le \epsilon, \ t \in \{0,1,...,T\}$, the following relation holds,
\begin{align}
 & M(x+q_t \mathbf{1}_t) ^Tx = M(x)^Tx 
\end{align}
\end{lemma}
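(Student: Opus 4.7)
The plan is to use the characterization of a boundary profile as one where the Rainflow algorithm encounters a tie in its decision logic, and to show that any change in the incidence matrix induced by a small perturbation only involves edges whose endpoints share a common $x$-value, so that the product $M^T x$ is unaffected.

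First, I would precisely characterize when $x$ is a boundary profile by inspecting the strict decision inequalities in Algorithm~\ref{alg:rainflow}. The matrix $M(x)$ is locally constant except when one of these inequalities becomes an equality. The two sources are: (i) the switching time identification condition $\sgn(x_{t+1}-x_t)=-\sgn(x_t-x_{t-1})$ degenerates because $x_{t+1}=x_t$ or $x_t=x_{t-1}$; and (ii) the full-cycle extraction test $\Delta_{j-1}\ge \Delta_j$ or $\Delta_{j+1}\ge \Delta_j$ is attained with equality, so an arbitrarily small perturbation can swap the order in which full cycles are extracted or merge a residual half-cycle with a cycle. Because these inequalities are finite in number, I would choose $\epsilon$ strictly smaller than every positive gap among them, so that a perturbation $q_t\mathbf{1}_t$ with $|q_t|\le \epsilon$ can only resolve ties at time index $t$ (or its two immediate neighbors) and leaves all other algorithmic branches unchanged.

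Next, for each mode of tie resolution I would show that the edges appearing in the symmetric difference $\mathcal{E}(x+q_t\mathbf{1}_t)\triangle \mathcal{E}(x)$ connect nodes on which $x$ takes equal values. The prototypical case is $\Delta_{j-1}=\Delta_j$: rerouting which full cycle is extracted first changes the edge set, but the endpoints of the swapped edges satisfy $x_{S_r[j-1]}=x_{S_r[j+1]}$ (since two equal net SoC changes occur in opposite directions), and analogous equalities hold when $\Delta_{j+1}=\Delta_j$. When a switching time is gained or lost, the affected pair of nodes also has identical $x$-values by the degeneracy condition above. In every case, the columns of $M(x+q_t\mathbf{1}_t)^T - M(x)^T$ corresponding to the swapped edges annihilate $x$, so $(M(x+q_t\mathbf{1}_t)^T - M(x)^T)\,x = 0$.

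The main obstacle is the case analysis: the Rainflow algorithm has several interacting branches (switching-time detection, iterative full-cycle extraction, residual assembly), and ties can occur at any of them; moreover, since $d=M^T x$ is a vector with a specific column ordering, I must also verify that the surviving edges are enumerated in a compatible order after the tie is resolved, not merely that the multiset of depths is preserved. A cleaner route worth attempting is to decompose $x=\sum_{s}p_s\mathbf{1}_s$ via Remark~\ref{step_decomposition} and to reduce the claim to the identity $(M(x+q_t\mathbf{1}_t)^T-M(x)^T)\mathbf{1}_s=0$ for every $s\in\{0,\dots,T\}$, which is a purely combinatorial bookkeeping statement over the affected edges. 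Either way, the result should follow from the observation that boundary profiles are exactly the loci where the algorithm is indifferent between cycle decompositions of equal signed depth along $x$.
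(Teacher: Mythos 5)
Your proposal is correct and follows essentially the same route as the paper's proof: identify the tie conditions under which $M(\cdot)$ changes ($\Delta_j=\Delta_{j-1}$ or $\Delta_j=\Delta_{j+1}$, and the degenerate $\Delta_j=0$ case), observe that each tie forces an equality such as $x_{S_r}(j-1)=x_{S_r}(j+1)$ or $x_{S_r}(j)=x_{S_r}(j+2)$ among the affected node values, and conclude that the competing edge sets yield identical depth vectors when applied to $x$ itself. Your extra care about choosing $\epsilon$ below the minimal positive gap and about column ordering in $d=M(x)^Tx$ addresses details the paper treats only implicitly, but the substance of the argument is the same.
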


\begin{proof}
A boundary profile seamlessly transitions between forming and not forming a full cycle. This transition is characterized by either of the following conditions. Given $(\Delta_{j-1},\Delta_j,\Delta_{j+1})= \diff(x,S_r,j)$, 
\begin{itemize}
    \item $\Delta_j = \Delta_{j-1}$, or $\Delta_j = \Delta_{j+1}$ $\implies$ $x_{S_r}(j-1)=x_{S_r}(j+1)$, or, $x_{S_r}(j)=x_{S_r}(j+2)$. WLOG, assume $\Delta_j = \Delta_{j-1}$, then any infinitesimally small change $ q_t, \ \abs{q_t} \le \epsilon, $ can alter $\Delta_j$, such that, either $\Delta_j < \Delta_{j-1}$, and this cycle of depth $\Delta_j$ can still be extracted as in steps 10-15 in Algorithm~\ref{alg:rainflow}, or $\Delta_j > \Delta_{j-1}$, and this cycle cannot be extracted anymore. Even though, the depth vector will be different for $\Delta_j < \Delta_{j-1}$ as in step 13 in Algorithm~\ref{alg:rainflow}, and for $\Delta_j > \Delta_{j-1}$ as in step 19 in Algorithm~\ref{alg:rainflow}. In the boundary case of $\Delta_j =\Delta_{j-1}$, as $x_{S_r}(j-1)=x_{S_r}(j+1)$, the depth vector can be written in any of the two forms as in step 13, or step 19 in Algorithm~\ref{alg:rainflow}, by substituting $x_{S_r}(j-1)$ with $x_{S_r}(j+1)$ and vice versa. 
    \item $\Delta_j = 0$ $\implies$ $x_{S_r}(j)=x_{S_r}(j+1)$. In this case, any infinitesimally small change $ q_t, \ \abs{q_t} \le \epsilon, $ can alter $\Delta_j$, such that, either $\Delta_j >0$, and the cycle of depth $\Delta_j$ can be extracted as in steps 10-15 in Algorithm~\ref{alg:rainflow}, or $(x_{S_r}(j-1),x_{S_r}(j),x_{S_r}(j+1))$ is monotonic, and this cycle cannot be extracted anymore. In the boundary case of $\Delta_j = 0$, since $(x_{S_r}(j-1),x_{S_r}(j),x_{S_r}(j+1))$ is monotonic, no cycle is extracted, which is equivalent to extracted cycle of depth $0$, as $x_{S_r}(j)=x_{S_r}(j+1)$ holds at the boundary.  
\end{itemize}
Despite $M(x+ q_t\mathbf{1}_t) \neq M(x)$ in the limit $ q_t  \rightarrow 0$, $M(x+q_t \mathbf{1}_t) ^Tx = M(x)^Tx$ holds for the boundary profile $x$ due to the exactly same depth vectors.  
\end{proof}

\begin{lemma}
Consider two SoC profiles $x$ and $y$ such that $M_c(y) = M_c(x)$ and $y = x + {q}_t\mathbf{1}_t,$ where $q_t$ is nonzero, $t \in \{0,...,T\}$. 
The following relations always hold,
\begin{small}
\begin{eqnarray}
    &\abs{\Delta d_{c,i}} \leq \abs{q_t}, \ \forall i \in \{1,...,T\}\\
        &\abs{ \sum_{i=1}^T \Delta d_{c,i}} \leq \abs{q_t}
\end{eqnarray}
\end{small}%
with $d_{c,i}(x) := [M_c(x)^Tx]_i$ and $\Delta d_{c,i}(y,x):= d_{c,i}(y)-d_{c,i}(x) = [M_c(y)^Ty]_i - [M_c(x)^Tx]_i$.

\label{lemma_3}
\end{lemma}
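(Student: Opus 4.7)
The plan is to exploit the assumption $M_c(y) = M_c(x) =: M_c$ to linearize the difference. Writing $y - x = q_t \mathbf{1}_t$, we immediately obtain
\begin{equation*}
\Delta d_{c,i} = [M_c^T y]_i - [M_c^T x]_i = [M_c^T(y-x)]_i = q_t \, [M_c^T \mathbf{1}_t]_i,
\end{equation*}
so that both inequalities reduce to bounding entries and row-sums of $M_c^T \mathbf{1}_t$ by $1$ in absolute value. This turns the lemma into a purely combinatorial statement about the incidence matrix of the charging digraph.

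For the first inequality, I would analyze $[M_c^T \mathbf{1}_t]_i$ column-by-column. By construction of Algorithm~\ref{alg:incidence_matrix}, each nonzero column $i$ of $M_c$ corresponds to a single charging half-cycle edge and therefore contains exactly one $+1$ and exactly one $-1$ (padded columns are identically zero). The quantity $[M_c^T \mathbf{1}_t]_i = \sum_{j \ge t} [M_c]_{ji}$ is a partial column sum over the suffix of row indices $j \ge t$, which can include either both of, exactly one of, or neither of these two nonzero entries. Consequently $[M_c^T \mathbf{1}_t]_i \in \{-1, 0, +1\}$, yielding $|\Delta d_{c,i}| \le |q_t|$.

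For the second inequality, I would sum over $i$ and use the identity
\begin{equation*}
\sum_{i=1}^{T} \Delta d_{c,i} \;=\; q_t \, \mathbf{1}^T M_c^T \mathbf{1}_t \;=\; q_t \, (M_c \mathbf{1})^T \mathbf{1}_t,
\end{equation*}
at which point Lemma~\ref{mcxlemma} directly gives $|(M_c \mathbf{1})^T \mathbf{1}_t| \le 1$ and hence the claim.

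The main obstacle is not really in the algebra but in justifying the structural fact that every nonzero column of $M_c$ carries exactly one $+1$ and one $-1$; this must be read off from the two construction stages in Algorithm~\ref{alg:incidence_matrix} (full-cycle edges added twice, plus residual half-cycle edges), together with the zero-padding convention for unused columns. Once that is pinned down, the rest of the proof is a one-line linearity argument combined with an appeal to Lemma~\ref{mcxlemma}.
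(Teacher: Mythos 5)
Your proposal is correct and follows essentially the same route as the paper's proof: linearize via $\Delta d_c = q_t\,M_c(x)^T\mathbf{1}_t$ using $M_c(y)=M_c(x)$, bound each entry by noting that every nonzero column of $M_c$ has exactly one $+1$ and one $-1$ so the suffix partial sum lies in $\{-1,0,+1\}$, and invoke Lemma~\ref{mcxlemma} for the aggregated bound. Your explicit justification of the column structure is in fact slightly more careful than the paper, which asserts $(M_c(x)^T\mathbf{1}_t)_i\in\{-1,0,+1\}$ without elaboration.
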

\begin{proof}: 
For the first result,
\begin{small}
\begin{align*}
    &\Delta d_c =  M_c(y)^{T}y- M_c(x)^{T}x \\
    \implies &\Delta d_c =  M_c(x)^{T}(y-x), \ \textrm{from assumption}\\
    \implies &\Delta d_c =  M_c(x)^{T}(q_t \mathbf{1}_t)\\
    \implies &\Delta d_c =   q_t(M_c(x)^{T}\mathbf{1}_t)\\
    \implies &\Delta d_{c,i} =   q_t\sum_{j=1}^{T+1} M_c(x)_{ij}^T\mathbf{1}_{t,j}\\
    \implies &\Delta d_{c,i} =   q_t\sum_{j=t}^{T+1} M_c(x)_{ji}\\
    \implies &\abs{\Delta d_{c,i}} =  \abs{ q_t\sum_{j=t}^{T+1} M_c(x)_{ji}}\\
   \implies &  \abs{\Delta d_{c,i}} \leq  \abs{q_t}, \ \forall i \in \{1,...,T\}
\end{align*}
\end{small}%
where the last inequality holds due the fact 
$(M_c(x)^{T}\mathbf{1}_t)_{i} \in  \{-1,0,+1\}$. 
Further, the second result holds as follows
\begin{align*}
    \abs{\sum_{i=1}^T \Delta d_{c,i}} = \ & \abs{\mathbf{1}^T\Delta d_c} = \abs{\mathbf{1}^TM_c(x)^T(y-x)}\\
    = & \abs{(M_c(x)\mathbf{1})^T( q_t \mathbf{1}_t)}\\
    = &  \abs{q_t} \abs{(M_c(x)\mathbf{1})^T\mathbf{1}_t}\\
    \implies \abs{\sum_{i=1}^T \Delta d_{c,i}} \le &  \abs{q_t}, \ \textrm{using Lemma~\ref{mcxlemma} }
\end{align*}
This completes the proof.
\end{proof}

Now, we will provide bounds on the change in the depth vector $d_c=M_c(x)^Tx$ of SoC profile $x$, for an arbitrary step change perturbation denoted by  $q_t\mathbf{1}_t, \ t \in \{0,1,...,T\}$.

\begin{proposition}\label{proposition_1} Consider a step change $q_t\mathbf{1}_t$ added to SoC profile $x$, s.t. $y = x + q_t\mathbf{1}_t,$ where $q_t$ is nonzero and $t \in \{0,...,T\}$. The Rainflow incidence matrix (only considering charging half-cycle depths) for $x$ and $y$ are given by $M_c(x)$ and $M_c(y)$, respectively. 
Then the following holds,
\begin{eqnarray}
    &\abs{\Delta d_{c,i}} \leq  \abs{q_t}, \forall i \in \{1,...,T\} \label{prop6.2}\\
    &\abs{ \sum_{i = 1}^{T} \Delta d_{c,i}} \leq \abs{q_t} \label{prop6.1} 
    \end{eqnarray}
\end{proposition}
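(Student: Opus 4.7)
The plan is to reduce to the constant--incidence--matrix setting of Lemma~\ref{lemma_3} by a telescoping argument along the straight line connecting $x$ and $y=x+q_t\mathbf{1}_t$. Without loss of generality assume $q_t>0$ and parameterize the intermediate profiles by
\begin{align*}
x(s) := x + s\,\mathbf{1}_t, \qquad s \in [0,q_t].
\end{align*}

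First observe that $s\mapsto M_c(x(s))$ is piecewise constant with only finitely many discontinuities. Indeed, Algorithm~\ref{alg:rainflow} branches on finitely many inequalities among coordinates of $x(s)$, each of which is affine in $s$; so only finitely many parameter values $0<s_1<\cdots<s_k<q_t$ can saturate one of these inequalities, and these are precisely the $s$ at which $x(s)$ is a boundary profile. Denote by $M_c^{(j)}$ the common value of $M_c(x(s))$ on the open interval $(s_{j-1},s_j)$, with the convention $s_0:=0$ and $s_{k+1}:=q_t$. Applying the boundary--profile lemma at $x(s_j)$ with step $\pm\delta\,\mathbf{1}_t$ for small $\delta>0$ gives
\begin{align*}
(M_c^{(j)})^{T} x(s_j) = M_c(x(s_j))^{T} x(s_j) = (M_c^{(j+1)})^{T} x(s_j),
\end{align*}
so although $M_c(x(s))$ itself may jump at $s_j$, the depth vector $d_c(s):=M_c(x(s))^{T}x(s)$ is continuous in $s$ on $[0,q_t]$; in particular, on each closed subinterval $[s_{j-1},s_j]$ the depths at both endpoints are computable via the \emph{same} effective matrix $M_c^{(j)}$.

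Now apply Lemma~\ref{lemma_3} to each piece with this common matrix $M_c^{(j)}$ to obtain, for every coordinate $i$ and every $j$,
\begin{align*}
\bigl|d_{c,i}(x(s_j)) - d_{c,i}(x(s_{j-1}))\bigr| &\le s_j - s_{j-1}, \\
\Bigl|\textstyle\sum_{i=1}^{T} \bigl[d_{c,i}(x(s_j)) - d_{c,i}(x(s_{j-1}))\bigr]\Bigr| &\le s_j - s_{j-1},
\end{align*}
where the second estimate additionally invokes Lemma~\ref{mcxlemma}. Summing over $j=1,\ldots,k+1$ and applying the triangle inequality yields $|\Delta d_{c,i}|\le\sum_{j}(s_j-s_{j-1})=q_t=|q_t|$ and analogously $|\sum_i\Delta d_{c,i}|\le|q_t|$, proving~\eqref{prop6.2} and~\eqref{prop6.1}.

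The main technical obstacle is to verify rigorously the two structural claims underpinning the argument: that $s\mapsto M_c(x(s))$ has only finitely many jump points (which requires a careful inspection of how Algorithm~\ref{alg:rainflow}'s branching behaves under the affine perturbation $s\mapsto x+s\mathbf{1}_t$, particularly when several inequality saturations collapse into a single boundary profile), and that the pointwise boundary--profile lemma suffices to glue the per--piece affine maps $(M_c^{(j)})^{T}x(s)$ into a globally continuous depth trajectory $d_c(s)$ even when the row ordering of $M_c$ changes. Once these two facts are secured, the final bound is just a telescoping of the per--piece estimate provided by Lemma~\ref{lemma_3}.
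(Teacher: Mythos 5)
Your proof is correct and follows essentially the same route as the paper: both split the perturbation $q_t\mathbf{1}_t$ at the finitely many boundary profiles encountered along the segment from $x$ to $y$, invoke the boundary-profile lemma to show the depth vector $M_c(\cdot)^T(\cdot)$ does not jump there, apply Lemma~\ref{lemma_3} on each subinterval where $M_c$ is constant, and telescope with the triangle inequality. Your explicit parameterization $x(s)=x+s\mathbf{1}_t$ and the remark that the branching conditions of Algorithm~\ref{alg:rainflow} are affine in $s$ (hence only finitely many boundary crossings) is a slightly more careful packaging of the finiteness claim that the paper takes for granted.
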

\begin{proof}: The step change is split into parts, i.e., $\sum_{i=1}^n \Delta q_{i,t} = q_t$ and $\sum_{i=1}^n \vert \Delta q_{i,t}\vert = \vert q_t \vert$ 
with $n$ being the number of all possible boundary profiles that the convex combination between $x$ and $y$ intersects. We define $x_{k+1} =x_k +\Delta q_{k,t}\mathbf{1}_t, \ k \in \{1,...,n\},$ with $x_1 = x,$ and $ \ x_{n+1} =y $, such that each $x_{i+1} \ i \in \{1,...,n-1\}$ is a boundary profile, and therefore, $M(x_{i+1} + \epsilon\mathbf{1}_t)^Tx_{i+1} = M(x_{i+1}-\epsilon \mathbf{1}_t)^Tx_{i+1}$, holds from the definition of a boundary profile, where $\epsilon \in \mathbb{R} $ is sufficiently small.
%
\begin{small}
\begin{align*}
    &\abs{\Delta d_c} \\
    = &\abs{M_c(y)^{T}y - M_c(x)^{T}x}\\
    =& \abs{M_c(y)^{T}y -\sum_{i=2}^n\left( M_c(x_{i}+\epsilon\mathbf{1}_t)- M_c(x_{i}-\epsilon\mathbf{1}_t)\right)^{T}x_{i} - M_c(x)^Tx}\\
    =& \abs{M_c(y)^{T}y -\sum_{i=2}^n\left( M_c(x_{i}+\epsilon\mathbf{1}_t)- M_c(x_{i-1}+\epsilon\mathbf{1}_t)\right)^{T}x_{i} - M_c(x)^Tx}\\
    \leq & \abs{M_c(y)^{T}y - M_c(x_{n}+\epsilon\mathbf{1}_t)^{T}x_{n}} +\sum_{i=2}^{n-1} \abs{ M_c(x_{i}+\epsilon\mathbf{1}_t)^{T}(x_{i+1}-x_{i})}  \\ 
    &
    +\abs{M_c(x_{1}+\Delta\mathbf{1}_t)^{T}x_2 - M_c(x)^{T}x}\\
    \leq & \sum_{i=1}^n\abs{\Delta q_{i,t}} \mathbf{1} 
    = \abs{q_t}\mathbf{1} \\
    \implies & \abs{\Delta d_{c,i}} \leq \abs{q_t}, \ \forall i \in \{1,...,T\}
\end{align*}
\end{small}%
where the third equality holds, as incidence matrix $M_c$ remains same, i.e., $M_c(x_{i+1}-\epsilon \mathbf{1}_t) = M_c(x_{i} + (\delta q_i,t-\epsilon) \mathbf{1}_t) =  M_c(x_{i}+\epsilon \mathbf{1}_t)$, and the fourth inequality follows from the triangle inequality property.

Similarly, the second result holds as a consequence of
\begin{small}
\begin{align*}
    &\abs{\sum_{i=1}^T \Delta d_{c,i}} \\
    = & \abs{\mathbf{1}^T\Delta d_c}\\
    =& \abs{\mathbf{1}^T\left(M_c(y)^{T}y - M_c(x)^{T}x\right)}\\ 
    = & \abs{\mathbf{1}^T\big(M_c(y)^{T}y - \sum_{i=2}^n\left( M_c(x_{i}+\epsilon\mathbf{1}_t)- M_c(x_{i}-\epsilon\mathbf{1}_t)\right)^{T}x_{i} -M_c(x)^{T}x\big) }\\
    = & \abs{\mathbf{1}^T\big(M_c(y)^{T}y - \sum_{i=2}^n\left( M_c(x_{i}+\epsilon\mathbf{1}_t)- M_c(x_{i-1}+\epsilon\mathbf{1}_t)\right)^{T}x_{i} -M_c(x)^{T}x\big) }\\
    \leq & \abs{\mathbf{1}^T\left(M_c(y)^{T}y - M_c(x_{n}+\epsilon\mathbf{1}_t)^{T}x_{n}\right)} +\sum_{i=2}^{n-1} \bigg|\mathbf{1}^TM_c(x_{i}+\epsilon\mathbf{1}_t)^{T} \\
    & (x_{i+1}- x_{i})\bigg|+ 
    \abs{\mathbf{1}^T\left(M_c(x_{1}+\epsilon\mathbf{1}_t)^{T}x_2 - M_c(x)^Tx\right)}\\
    \leq & \sum_{i=1}^n\abs{\Delta q_{i,t}}
    \textrm{, using Lemma }\ref{lemma_3}\\
    = & \abs{q_t}
\end{align*}
\end{small}
This completes the proof.
\end{proof}

For any SoC profile $x$, the cost calculated from the cycle stress function using Rainflow cycle counting method, i.e., $C_s(x)$, is larger or equal than the cost calculated from cycle stress function using naive enumeration of the profile $x$ based on every switch between charging and discharging half-cycles, as summarized in the lemma below. Recall the set $S_r$ of switching time indices from steps~\ref{sgn_step_start}-\ref{sgn_step_end} in Algorithm~\ref{alg:rainflow}.

\begin{lemma}
\label{prop8}
Given a SoC profile $x$, using the step decomposition of the profile, i.e. $x = \sum_{t = 0}^{T} p_{t}\mathbf{1}_{t}$ as defined in Remark~\ref{step_decomposition}, the following holds,
\begin{align}
    C_s(x) \ge  
    \sum_{i=1}^{|S_r|-1} \Phi\left(\abs{\sum_{t=S_r[i]+1}^{S_r[i+1]}p_t}\right), 
\end{align}
where terms in the R.H.S represents consecutive half-cycles under naive enumeration of the SoC profile $x$. The set $S_r$ contains the time indices where the profile $x$ changes direction, steps~\ref{sgn_step_start}-\ref{sgn_step_end} in Algorithm~\ref{alg:rainflow}.  
\label{lemma_4}
\end{lemma}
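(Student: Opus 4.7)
The plan is to track the cycling cost along the execution of Algorithm~\ref{alg:rainflow} and to show that each full-cycle extraction can only increase a suitably defined surrogate cost that interpolates between the naive enumeration on the right-hand side and $C_s(x)$. Concretely, after $k$ full-cycle extractions let $S_r^k$ denote the current residual set (so $S_r^0$ coincides with the switching-time set produced by steps~\ref{sgn_step_start}--\ref{sgn_step_end}) and let $\delta_1,\dots,\delta_k$ be the depths of the full cycles already extracted. Define the running cost
\begin{equation*}
C_k := 2\sum_{i=1}^{k}\Phi(\delta_i) + \sum_{j=1}^{|S_r^k|-1} \Phi\bigl(|x_{S_r^k}(j+1) - x_{S_r^k}(j)|\bigr).
\end{equation*}
At $k=0$ this equals the right-hand side of the claim (using $x_{S_r[i+1]}-x_{S_r[i]} = \sum_{t=S_r[i]+1}^{S_r[i+1]} p_t$), and at termination the final stage of the algorithm (steps~\ref{check3}--\ref{end}) records exactly the residual half-cycle depths so $C_{\mathrm{final}} = C_s(x)$. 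It therefore suffices to prove $C_{k+1} \geq C_k$ at every extraction step.

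The essential fact I would invoke is the superadditivity of the stress function: $\Phi(a)+\Phi(b)\leq \Phi(a+b)$ for $a,b\geq 0$ with $a+b\leq 1$. This follows from convexity together with $\Phi(0)=0$ through the elementary bound $\Phi(a)=\Phi\bigl(\tfrac{a}{a+b}(a+b)+\tfrac{b}{a+b}\cdot 0\bigr)\leq \tfrac{a}{a+b}\Phi(a+b)$, which added to the symmetric inequality gives the claim. The stress function $\Phi(d)=\tfrac{\alpha_b}{2}d^{\beta_b}$ with $\beta_b>1$ clearly meets these hypotheses on $[0,1]$.

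Now consider the extraction triggered at position $j$ with $(\Delta_{j-1},\Delta_j,\Delta_{j+1}) = \diff(x,S_r^k,j)$ satisfying $\Delta_{j-1}\geq \Delta_j$ and $\Delta_{j+1}\geq \Delta_j$. The running cost loses the three adjacent residual contributions $\Phi(\Delta_{j-1})+\Phi(\Delta_j)+\Phi(\Delta_{j+1})$ and gains $2\Phi(\Delta_j)$ (from the newly extracted full cycle) together with $\Phi(\Delta_{j-1}-\Delta_j+\Delta_{j+1})$ from the single merged residual segment that now connects $S_r^k[j-1]$ to $S_r^k[j+2]$; the merged depth follows from the alternating-direction structure of $S_r^k$ (one checks $x_{S_r^k[j+2]}-x_{S_r^k[j-1]} = \pm(\Delta_{j-1}-\Delta_j+\Delta_{j+1})$). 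The desired inequality $C_{k+1}\geq C_k$ thus reduces to
\begin{equation*}
\Phi(\Delta_j) + \Phi(\Delta_{j-1}-\Delta_j+\Delta_{j+1}) \geq \Phi(\Delta_{j-1}) + \Phi(\Delta_{j+1}),
\end{equation*}
which I would establish by two applications of superadditivity: first $\Phi(\Delta_{j-1}-\Delta_j+\Delta_{j+1})\geq \Phi(\Delta_{j-1}-\Delta_j)+\Phi(\Delta_{j+1})$, then $\Phi(\Delta_{j-1}-\Delta_j)+\Phi(\Delta_j)\geq \Phi(\Delta_{j-1})$; both are legitimate since $\Delta_{j-1}-\Delta_j\geq 0$ by the extraction condition. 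Chaining over all extractions yields $C_s(x) = C_{\mathrm{final}} \geq C_0$, which is the claim.

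The hard part is not the algebra but the bookkeeping: making precise that the three naively enumerated segments of depths $\Delta_{j-1},\Delta_j,\Delta_{j+1}$ actually collapse into one merged segment of depth $\Delta_{j-1}-\Delta_j+\Delta_{j+1}$ in $S_r^{k+1}$, and that this remains true at the corner cases $j=2$ or $j+2=|S_r^k|$. Both statements reduce to the alternating-direction property that is built into the switching-time construction of $S_r^0$ in steps~\ref{sgn_step_start}--\ref{sgn_step_end} and is preserved by extraction, since removing two adjacent indices from an alternating sequence leaves another alternating sequence.
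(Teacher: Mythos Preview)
Your overall architecture is the same as the paper's: both arguments track the Rainflow extraction process and show that each full-cycle extraction can only increase the surrogate cost, with the one-step key inequality
\[
\Phi(\Delta_j) + \Phi(\Delta_{j-1}-\Delta_j+\Delta_{j+1}) \;\ge\; \Phi(\Delta_{j-1}) + \Phi(\Delta_{j+1}).
\]
The paper does exactly this for a single extraction (citing \cite[Proposition~4]{shi2017optimal} for the inequality) and then remarks that it iterates; your running-cost bookkeeping $C_k$ is a cleaner way to say the same thing.

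However, your proposed justification of the displayed inequality is incorrect. The first superadditivity step, $\Phi(\Delta_{j-1}-\Delta_j+\Delta_{j+1})\ge \Phi(\Delta_{j-1}-\Delta_j)+\Phi(\Delta_{j+1})$, is fine. The second step you write, $\Phi(\Delta_{j-1}-\Delta_j)+\Phi(\Delta_j)\ge \Phi(\Delta_{j-1})$, is the \emph{reverse} of superadditivity and is false for every strictly convex $\Phi$ with $\Phi(0)=0$ (for $\Phi(d)=d^2$ the gap equals $2\Delta_j(\Delta_j-\Delta_{j-1})\le 0$). Two superadditivity steps cannot work here because the target inequality is not a sum-versus-sum-of-parts comparison; it is a majorization/increasing-differences statement. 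A correct one-line argument: convexity of $\Phi$ makes $t\mapsto \Phi(t+h)-\Phi(t)$ nondecreasing for any $h\ge 0$, so with $h=\Delta_{j+1}-\Delta_j\ge 0$ and $t=\Delta_{j-1}\ge \Delta_j$ one gets
\[
\Phi(\Delta_{j-1}-\Delta_j+\Delta_{j+1})-\Phi(\Delta_{j-1}) \;\ge\; \Phi(\Delta_{j+1})-\Phi(\Delta_j),
\]
which rearranges to what you need. Equivalently, the pair $(\Delta_j,\Delta_{j-1}-\Delta_j+\Delta_{j+1})$ majorizes $(\Delta_{j-1},\Delta_{j+1})$ and Karamata applies. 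With this fix your argument goes through and matches the paper's.
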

\begin{proof}: WLOG, we assume that SoC profile $x$ has at most one cycle, and the proof is divided into sub cases.
\begin{itemize}
    \item Case 1: There is no cycle in the SoC profile $x$, i.e., set $S_r = \{0,T\}$. In this case equality holds, as the Rainflow algorithm simply returns the same vector of charging/discharging half-cycle depths as in the case of naive enumeration of profile $x$ based on every switch between charging and discharging half-cycles.

\item Case 2: There is a cycle, i.e., set $S_r = \{0,t_1,t_2,T\}$ with $0<t_1<t_2<T$ and $\Delta_1 = |x_0-x_{t_1}|=|p_1...+p_{t_1}|, \ \Delta_2 = |x_{t_1}-x_{t_2}|= |p_{t_1+1}+p_{t_1+2}...+p_{t_2}|$ and $\Delta_3 = |x_{t_2}-x_T| = |p_{t_2+1}+p_{t_2+2}...+p_{T}|$, such that $\Delta_1\geq \Delta_2,$ and $\Delta_3\geq \Delta_2$ hold. The cost of storage degradation from cycle stress function by naively enumerating profile $x$ is given by 
\begin{small}
\begin{align*}
    g(x) =& \sum_{i=1}^{|S_r|-1} \Phi\left(\abs{\sum_{t=S_r[i]+1}^{S_r[i+1]}p_t}\right)\\
    = &\left(\Phi\left(\abs{\sum_{t=1}^{t_1}p_t}\right)+\Phi\left(\abs{\sum_{t=t_1+1}^{t_2} p_t}\right)+\Phi\left(\abs{\sum_{t=t_2+1}^Tp_t}\right)\right) \\
    = & \phi(\Delta_1)+\phi(\Delta_2)+\phi(\Delta_3)
\end{align*}\end{small}%
However, using Rainflow algorithm, a full cycle of depth $\Delta_2$ is extracted and the depth vector $d = [\Delta_2, \Delta_2, |x_{T}-x_{0}|]^T = [\Delta_2, \Delta_2, \Delta_3+\Delta_2-\Delta_1]^T $. Therefore, the cost from cycle stress fucntion using Rainflow algorithm is as follows, 
\begin{small}
\begin{align*}
    C_s(x) = &\phi(\Delta_2)+\phi(\Delta_2)+\phi(\Delta_1+\Delta_3-\Delta_2)\\
    &\textrm{where, } \Delta_1+\Delta_3-\Delta_2 \ge 0, \ \text{using \cite[Proposition~4]{shi2017optimal}}\\
    \geq &\phi(\Delta_2)+\phi(\Delta_2)+\phi(\Delta_1)+\phi(\Delta_3) -\phi(\Delta_2) \\
    = &\phi(\Delta_2)+\phi(\Delta_1)+\phi(\Delta_3) = g(x)
\end{align*}\end{small}
\end{itemize}
The result could generalize for any number of cycles. To see this, notice that during extraction of full cycles as per Rainflow algorithm, for every cycle extracted, the intermediate cost of storage degradation increase as shown in the proof.   
\end{proof}
Now we provide the proof of Theorem 1 as below. Again, we only prove the result for charging half-cycle depths, which can generalize to discharging half-cycle depths. 

Consider an arbitrary SoC profile $x$ with T time steps, and let $y_{k}, ~k \in \{0,1,...,T\}$, represent the profile which is calculated by taking convex combination of $x$ and an arbitrary profile $q^k, \ k \in  \{0,1,...,T\}, \ q^{k} = \sum_{t = 0}^{T}q_t \mathbf{1}_t$, such that the SoC profile $q^{k}$ includes at most $k$ nonzero step changes or $k$ nonzero amplitudes,  
\begin{small}
\begin{align*}
    &y_{k} = \lambda x+(1-\lambda) q^{k}, \textrm{ and} \\
    &y_{k} = y_{k-1}+(1-\lambda) q^{1}
\end{align*}
\end{small}
In the proof, we evaluate the change in profile $x$ due to $q^{k}$ by evaluating the change in depth vector due to one step change perturbation at a time. 

Now, considering only charging half-cycle depths (it can be proved similarly for the discharging half-cycle depths), the Rainflow cycle algorithm gives charging half-cycle depths for $x$ and $y_1$ as,
\begin{small}
\begin{align*}
    &x : d_{c,1}(x), d_{c,2}(x),...,d_{c,m}(x),...,d_{c,M}(x),0,0...\\
    &y_{1} : d_{c,1}(y_1), d_{c,2}({y_1}),...,d_{c,n}({y_1}),...,d_{c,N}({y_1}),0,0,0...
\end{align*}
\end{small}%
where $d_{c,i}(x)$ and $d_{c,i}({y_1})$ represent the charging half-cycle depth obtained from $M_c(x)^Tx$ and $M_c(y_1)^Ty_1$, respectively. Further, define $\Delta d_{c,i}({y_1})$ to satisfy
\begin{small}
\begin{align*}
    d_{c,i}({y_1}) = \lambda d_{c,i}(x) +(1-\lambda)\Delta d_{c,i}({y_1}), \ \forall i= 1,2,...,T.
\end{align*}
\end{small}%
Re-iterating the process for k steps, we get 
\begin{small}
\begin{align*}
    d_{c,i}({y_k}) = &d_{c,i}({y_{k-1}})  +(1-\lambda)\Delta d_{c,i}({y_k})\\
    d_{c,i}({y_k})= &\lambda d_{c,i}(x) +(1-\lambda)\sum_{j=1}^k\Delta d_{c,i}({y_j})\\
    \implies d_{c,i}({y_k}) = & \lambda d_{c,i}(x) +(1-\lambda)\Delta d'_{c,i},\textrm{ }\forall i= 1,2,...,T\\
\textrm{s.t. } \begin{pmatrix}
    \Delta d'_{c,1}\\
    \Delta d'_{c,2}\\
    \vdots\\
    \Delta d'_{c,T}
  \end{pmatrix}:=& \begin{pmatrix}
    \Delta d_{c,1}({y_1}) & \Delta d_{c,1}({y_2}) & \dots & \Delta d_{c,1}({y_k}) \\
    \Delta d_{c,2}({y_1}) & \Delta d_{c,2}({y_2}) & \dots & \Delta d_{c,2}({y_k}) \\
    \vdots & \vdots & \ddots & \vdots \\
    \Delta d_{c,T}({y_1}) & \Delta d_{c,T}({y_2}) & \dots & \Delta d_{c,T}({y_k})
  \end{pmatrix} \begin{pmatrix}
    1\\
    1\\
    \vdots\\
    1
  \end{pmatrix} 
\end{align*}
\end{small}%
Then the change in depth vector $d_c$ can be written as
\begin{small}
\begin{align*}
   \sum_{i = 1}^{T}\Delta d'_{c,i} = &\sum_{i=1}^T\left(\sum_{j = 1}^{k}\Delta d_{c,i}({y_j})\right) =\sum_{j=1}^k\left(\sum_{i = 1}^{T}\Delta d_{c,i}({y_j})\right)\\
   \implies \sum_{i = 1}^{T}\Delta d'_{c,i} \leq &\abs{\sum_{i = 1}^{T}\Delta d'_{c,i}} 
   \leq \sum_{j=1}^k\abs{\sum_{i = 1}^{T}\Delta d_{c,i}({y_j})}   \leq \sum_{t=0}^T \abs{q_{t}}
\end{align*}
\end{small}%
where the second last term denotes the change in the depth vector due to each step change perturbation, and the last inequality holds from Proposition~\ref{proposition_1} for each step change perturbation. 

Assume for now $q_i$, $\forall i \in \{0,1,...,T\}$ is non-negative. The case where $q_i$ is arbitrary is discussed later. We can write the above inequality as
\begin{align*}
    &\sum_{i = 1}^{T}\Delta d'_{c,i} \leq \sum_{t=0}^T q_{t}
\end{align*}

The degradation cost for profile $y_k = \lambda x + (1-\lambda)q^k$, due to charging half-cycle depths  only (the result holds similarly for discharging half-cycle depths) is as follows, 
\begin{small}
\begin{align*}
    &C_s(\lambda x + (1-\lambda)q^k)\\
    =&\sum_{i = 1}^{T}\Phi\left(\lambda d_{c,i}(x) +(1-\lambda)\Delta d'_{c,i}\right)\\
    =&\sum_{i = 1}^{T+}\Phi\left(\lambda d_{c,i}(x) +(1-\lambda)\Delta d'_{c,i}\right)+ \sum_{i = 1}^{T-}\Phi\left(\lambda d_{c,i}(x) +(1-\lambda)\Delta d'_{c,i}\right)\\
    \intertext{{\normalsize where the set of $\Delta d'_{c,i}$ is divided into parts based on its sign. Using convexity of $\Phi$ and  \cite[Proposition~3]{shi2017optimal}, we have}}
&C_s(\lambda x + (1-\lambda)q^k)\\
    \leq& \sum_{i = 1}^{T}\lambda\Phi \left(d_{c,i}(x)\right) + \left(1-\lambda\right)\left\{\sum_{i = 1}^{T+}\Phi\left(\Delta d'_{c,i}\right)-\sum_{i = 1}^{T-}\Phi\left(\abs{\Delta d'_{c,i}}\right)\right\}
    \end{align*}
    \end{small}%
There are two sub cases. 
\begin{itemize}
    \item Case 1: $\sum_{i=1}^{T} \Delta d'_{c,i} = \sum_{t=0}^T q_{t}$, then,
\end{itemize}
    \begin{small}
    \begin{align*}
    &\sum_{i = 1}^{T}\lambda\Phi \left(d_{c,i}(x)\right) + \left(1-\lambda\right)\left\{\sum_{i = 1}^{T+}\Phi\left(\Delta d'_{c,i}\right)-\sum_{i = 1}^{T-}\Phi\left(\abs{\Delta d'_{c,i}}\right)\right\}\\
    \quad &\textrm{{\normalsize using \cite[Proposition~5]{shi2017optimal}}}\\
    &\leq \sum_{i = 1}^{T}\lambda\Phi \left(d_{c,i}(x)\right) +\left(1-\lambda\right) \Phi\left(\sum_{i = 1}^{T}\Delta d'_{c,i}\right)\\
    &=\sum_{i = 1}^{T}\lambda\Phi \left(d_{c,i}(x)\right) +\left(1-\lambda\right) \Phi\left(\sum_{t=0}^T q_{t}\right)\\
    &= \lambda C_s(x) +(1-\lambda)C_s\left(\sum_{t = 0}^{T}q_{t}\mathbf{1}_{t}\right) = \lambda C_s(x) +(1-\lambda)C_s(q^k)
    \end{align*}
    \end{small}%
    \begin{itemize}
        \item Case 2: $\sum_{i=1}^{T} \Delta d'_{c,i} < \sum_{t=0}^T q_{t}$, then,
    \end{itemize}
    \begin{small}
    \begin{align*}
    & \sum_{i = 1}^{T}\lambda\Phi \left(d_{c,i}(x)\right) + \left(1-\lambda\right)\left\{\sum_{i = 1}^{T+}\Phi\left(\Delta d'_{c,i}\right)-\sum_{i = 1}^{T-}\Phi\left(\abs{\Delta d'_{c,i}}\right)\right\}\\
    \leq & \sum_{i = 1}^{T}\lambda\Phi \left(d_{c,i}(x)\right) + \left(1-\lambda\right)\bigg\{\sum_{i = 1}^{T+}\Phi\left(\Delta d'_{c,i}\right)-\sum_{i = 1}^{T-}\Phi\left(\abs{\Delta d'_{c,i}}\right)\\
    &+ \sum_{i = T+1}^{T+m}\Phi\left(\abs{\Delta d'_{c,i}}\right)\bigg\}, \quad \textrm{ s.t.,}\sum_{i=1}^{T+m} \Delta d'_{c,i} = \sum_{t=0}^T q_{t}\\
    \leq & \sum_{i = 1}^{T}\lambda\Phi \left(d_{c,i}(x)\right) +\left(1-\lambda\right) \Phi\left(\sum_{i = 1}^{T+m}\Delta d'_{c,i}\right), \text{\cite[Proposition~5]{shi2017optimal}}\\
    =&\sum_{i = 1}^{T}\lambda\Phi \left(d_{c,i}(x)\right) +\left(1-\lambda\right) \Phi\left(\sum_{t=0}^T q_{t}\right)\\
    &= \lambda C_s(x) +\left(1-\lambda\right)C_s\left(\sum_{t = 0}^{T}q_{t}\mathbf{1}_{t}\right)= \lambda C_s(x) +(1-\lambda)C_s(q^k)
\end{align*}
\end{small}%
Hence, the cost is convex. Now, we verify this result when $q_{t}\in \mathbb{R}$, $\forall t \in\{0,1,...,T\}$. Let us assume $q_t < 0$ for $t = h$. Also, WLOG, we assume equality, as we can always use positive compensators if necessary, to make L.H.S = R.H.S, i.e.,
\begin{small}
\begin{align*}
    \sum_{i = 1}^{T}\Delta d'_{c,i} = \sum_{t=0,\ t \neq h}^T q_{t} + \abs{q_h} \, .
\end{align*}
\end{small}%
Now, the sum can be partitioned into three parts, by taking relevant combination of $\Delta d'_{c,i}$ (with positive compensators if necessary) such that there are three sets of $\Delta d'_{c,i}$, i.e., $\sum_{j=1}^{h^1}\Delta d'_{c,j}$ = $\sum_{t=0}^{h-1}q_t$, $\sum_{j=1}^{h^2}\Delta d'_{c,j}$=$\abs{q_{h}}$, and, $\sum_{j=1}^{h^3}\Delta d'_{c,j} $=$\sum_{t=h+1}^{T}q_t$. Essentially, the sum is partitioned in three parts based on switching between charging and discharging half-cycles, which is smaller or equal than the cost calculated from cycle stress function using Rainflow cycle counting method as shown in Lemma~\ref{lemma_4}.  
Then, it follows that
\begin{small}
\begin{align*}
    &C_s(\lambda x + (1-\lambda)q^k)\\
    \leq &\sum_{i = 1}^{T}\lambda\Phi \left(d_{c,i}(x)\right) + \left(1-\lambda\right)\left\{\sum_{i = 1}^{T+}\Phi\left(\Delta d'_{c,i}\right)-\sum_{i = 1}^{T-}\Phi\left(\abs{\Delta d'_{c,i}}\right)\right\}\\
     = & \sum_{i = 1}^{T}\lambda\Phi \left(d_{c,i}(x)\right) + \left(1-\lambda\right)\sum_{n=1}^3\left\{\sum_{j = 1}^{h^{n}+}\Phi\left(\Delta d'_{c,j}\right)-\sum_{j = 1}^{h^{n}-}\Phi\left(\abs{\Delta d'_{c,j}}\right)
    \right\}\\
    &\textrm{using \cite[Proposition~5]{shi2017optimal}},\\
    \leq& \sum_{i = 1}^{T}\lambda\Phi \left(d_{c,i}(x)\right) +\left(1-\lambda\right)\sum_{n=1}^3\left\{ \Phi\left(\sum_{j = 1}^{h^n}\Delta d'_{c,j}\right)
    \right\}\\
    =&\lambda C_s(x) +\left(1-\lambda\right) \left(C_s\left(\sum_{t=0}^{h-1}q_t\mathbf{1}_t\right)+C_s\left((q_h)\mathbf{1}_h\right)+C_s\left(\sum_{t=h+1}^Tq_t\mathbf{1}_t\right)\right)\\
    &\textrm{using \text{Lemma \ref{lemma_4}}}\\
    \leq & \left[\lambda C_s(x) +\left(1-\lambda\right)C_s\left(\sum_{t = 0}^{T}q_{t}\mathbf{1}_{t}\right)\right]= \lambda C_s(x) +(1-\lambda)C_s(q^k)
\end{align*}
\end{small}%
It is observed that this can be done similarly for any number of negative $q_i, \ i \in  \{0,1,..,T\}$. 

Note that the result holds for both charging half-cycle depths and discharging half-cycle depths independently. Given that the general degradation cost that accounts for both charging and discharging half-cycle depths is simply the sum of all individual depth degradation, the convexity still holds. 
In summary, the cost of degradation $C_s(x)$ is convex with respect to SoC profile $x$.

\end{section}
}{}

\bibliographystyle{IEEEtran}
\bibliography{sample}

\end{document}